\documentclass[11pt,a4paper]{article}
\usepackage{authblk}
\usepackage{indentfirst}
\usepackage{graphicx}
\usepackage{epsfig}

\usepackage{latexsym}
\usepackage{amsmath}
\usepackage{amssymb}
\usepackage{amsfonts}
\usepackage{mathrsfs}
\usepackage{pifont}

\usepackage{amsthm}
\usepackage{float}
\usepackage{subfigure}
\usepackage{color}
\usepackage{multicol}

\usepackage[colorlinks,linkcolor=blue,filecolor=black,citecolor=blue]{hyperref}

\newtheorem{theorem}{Theorem}[section]
\newtheorem{lemma}[theorem]{Lemma}
\newtheorem{prop}[theorem]{Proposition}

\newtheorem{remark}{Remark}[section]

\newcommand{\mbf}{\boldsymbol}

\title{Explicit Runge approximation for Helmholtz equation with cylindrical harmonics}

\author[1]{Yu Chen}
\author[2]{Jin Cheng\thanks{Corresponding author: jcheng@fudan.edu.cn}}
\author[1]{Tingyue Li}
\affil[1]{{\small  School of Mathematics, Shanghai University of Finance and Economics, Shanghai, China}}
\affil[2]{{\small School of Mathematical Sciences, Fudan University, Shanghai, China}}
\date{}
\begin{document}

\maketitle

\begin{abstract}

Originating from the complex approximation of holomorphic functions, the Runge approximation for elliptic equations has evolved into a fundamental tool for inverse problems and even learning-based numerical methods since its proposition by Lax \cite{Lax1956} and Malgrange \cite{Malgrange1955}, with quantitative characterizations further established by  R\"uland and Salo \cite{Ruland2018}. It should be remarked here that Runge approximation is ill-posed. In numerical analysis, explicit quantitative estimates are required to characterize the dependence of the approximant’s growth on the outward continuation distance of the original solution. This paper investigates the spatial dependent quantitative Runge approximation for the Helmholtz equation using cylindrical harmonics, considering both interior and exterior boundary value problems. We explicitly derive the relevant indices for the three-circle configuration and obtain asymptotic indices for general geometric settings. The derived results provide norm estimates for the expansion coefficients, which are crucial for the implementation of regularization methods. Furthermore, the established bounds enable the construction of spectrally accurate numerical approximations for solutions to the Helmholtz equation.

\end{abstract}

Keywords: Runge approximation, quantitative estimate, Helmholtz equation, ill-posedness and regularization, learning-based numerical method.

\section{Introduction}

Runge approximation for elliptic equations, since being proposed by Lax \cite{Lax1956} and Malgrange \cite{Malgrange1955}, has become an increasingly significant tool in various inverse problems such as sampling method \cite{Cheng2005}, Calder\'on problem \cite{Ruland2021}, electromagnetic field control \cite{Harrach2018}, etc. It states that a solution to an elliptic equation can be approximated by a solution in a larger domain. The expense is the growth of the approximant outside the original domain. Recently, the quantitative Runge approximation proposed by R\"uland and Salo \cite{Ruland2018} reveals that, generally such growth is exponentially against the approximation error, and polynomially when the original solution can be extended outward. It has been shown to be important in the stability estimate of inverse problems \cite{Ruland2021}, even in the numerical analysis of learning based numerical method \cite{Chen2024}. The related quantitative results have also been extended to various systems, e.g. \cite{Ruland2020} \cite{Pohjola2022} \cite{Li2026}.

In numerical computations, Runge approximation allows considerable freedom in the choice basis. Solutions defined in a larger and regular domain can be used to approximate the solution in the original domain, e.g. fundamental solutions \cite{Barnett2008},  cylindrical harmonics \cite{Brubeck2022}, etc, providing adaptability and flexibility. Such an advantage is prominent in learning based methods, since the learning samples need not to be confined to the original domain, allowing a wide variety of existing data solutions to be employed.
A learning based method for Helmholtz equations is proposed in \cite{Chen2024} where the solution operator is reconstructed using fundamental solutions as learning samples. There, the quantitative Runge approximation in \cite{Ruland2018} was adopted in sample choice, regularization, and convergence analysis, to ensure interpretability and generalizability. The learning based method shows efficiency and accuracy in applications such as acoustic simulation \cite{Li2024}.

In numerical analysis, it should be noted that the growth of the approximant in Runge approximation leads to ill-posedness. 
A numerically feasible case is one in which the computational domain $\Omega$ is compactly contained in $\Omega_\delta$, the domain of existence of the solution. The corresponding bound for the approximant $u_\varepsilon$ on the larger domain $\tilde{\Omega}$ takes the form 
 \[
   % \|u|_{\partial\Omega}-u_\varepsilon|_{\partial\Omega}\|_{L^2(\partial\Omega)}\leq \varepsilon \|u\|_{H^{1/2}(\partial\Omega_\delta)}, \quad
   \| u_\varepsilon  \|_{H^{1/2}(\partial \tilde{\Omega})}\leq C\varepsilon^{-\mu }\|u\|_{L^2(\partial\Omega)}, 
   \]
which depends on some constant $\mu$ \cite{Ruland2018}. Qualitatively, $\mu$ may increase as the continuation distance $\delta$ tends to zero. However, for numerical analysis, a characterization of this dependence is desirable. Basically, the exponent is associated with some quantitative stability estimate of Cauchy problems, which, in the harmonic case, can be related to the harmonic measure \cite{Chen2022}\cite{Kabanikhin2026}. The corresponding relation in the Helmholtz case remains to be clarified.

In this work, we first derive a detailed estimate to illustrate that in the three-circle geometry, $\mu$ is explicitly related to the harmonic measure. In more general settings, we obtain that asymptotically
\(
\mu\sim \frac{C}{\delta}.
\)
In practice, cylindrical harmonics (Fourier-Bessel)
\[
\sum c_n J_n(k|x|)e^{in\theta}
\]
is a convenient choice of basis functions \cite{Gopal2019} or learning samples for interior problems. It is therefore natural to study the convergence rate, when approximated by this basis. Based on the above estimate, we can further derive bounds for the coefficients $c_n$, which are essential to apply the regularization method and the choice of regularization parameter \cite{Cheng2005}. With these results, we present an application to the numerical approximation of solutions in complex domains.

\section{Main result}
\begin{theorem}\label{runge-approx-new}
   Assume that $\Omega$ is a star-shaped domain with smooth boundary. $\Omega_\delta=\{x\in\mathbb{R}^2: dist(x,\Omega)<\delta\}$ ($\delta>0$) and $\Omega\Subset \Omega_\delta\Subset O_R $. $u$ satisfies Helmholtz equation $\Delta u+k^2u=0$ in $\Omega_\delta$ and there exists $\mu\in L^2(\partial\Omega_\delta)$ with $\|\mu\|_{L^2(\partial\Omega_\delta)}<M$ such that $u=\int_{\partial\Omega_\delta}\Phi_k(x,y)\mu(y)\mathrm{d}y$, where $\Phi_k(x,y)$ is the fundamental solution. Assume that $k^2$ is not a Dirichlet eigen value of $-\Delta$ in these domains. Then for any $\varepsilon>0$, $u|_{\Omega}$ can be approximated by a series converging in $O_R$ as
   \[
   u_\varepsilon (x) =\sum_{n\in\mathbb{Z}}c_n J_n(k|x|)e^{\mathrm{i}n\theta},
   \]
   such that
   \[
    \|u|_{\partial\Omega}-u_\varepsilon|_{\partial\Omega}\|_{L^2(\partial\Omega)}\leq \varepsilon M, \quad \| u_\varepsilon  \|_{H^{1/2}(\partial O_R)}\leq C\varepsilon^{-\frac{c}{\delta} }M, 
   \]
   where $c$ and $C$ are constants independent of $u$ and $\delta$.
\end{theorem}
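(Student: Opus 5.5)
We construct $u_\varepsilon$ directly from the single-layer representation of $u$ and make the growth bound explicit through a three-circle (Hadamard-type) interpolation.

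\textbf{Step 1 (mode decomposition).} By Graf's addition theorem, for $|x|<|y|$,
\[
\Phi_k(x,y)=\frac{\mathrm{i}}{4}\sum_{n\in\mathbb{Z}}H^{(1)}_n(k|y|)J_n(k|x|)e^{\mathrm{i}n(\theta-\theta_y)} .
\]
Since $\partial\Omega_\delta$ does not lie inside one disc, we do not use this expansion directly. Instead we first transfer $u$ to an interior problem. Because $k^2$ is not a Dirichlet eigenvalue, $u$ on $\Omega$ is determined by $u|_{\partial\Omega}$. We then seek $u_\varepsilon=\sum_{|n|\le N}c_nJ_n(k|x|)e^{\mathrm{i}n\theta}$ by a truncated (Tikhonov-type) least-squares fit on $\partial\Omega$. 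Equivalently, we study the operator
\[
A:\ H^{1/2}(\partial O_R)\to L^2(\partial\Omega)
\]
that maps Dirichlet data on $\partial O_R$ to the trace on $\partial\Omega$ of the corresponding Helmholtz solution. Its range consists precisely of traces of cylindrical-harmonic series converging in $O_R$.

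\textbf{Step 2 (duality / quantitative Runge).} Following R\"uland--Salo, the estimate
\[
\|u-u_\varepsilon\|\le\varepsilon M,\qquad \|u_\varepsilon\|_{H^{1/2}(\partial O_R)}\le C\varepsilon^{-\mu}M
\]
is equivalent to a conditional stability estimate for the adjoint, i.e.\ a Cauchy problem. Let $w$ solve the adjoint problem with source $\phi\in L^2(\partial\Omega)$ on $\partial\Omega$, so that $w$ solves Helmholtz in $O_R\setminus\overline\Omega$. Pairing with $u=\mathcal{S}\mu$ shows $\langle u,\phi\rangle=\langle \mu, w|_{\partial\Omega_\delta}\rangle$. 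Hence it suffices to prove the three-domain inequality
\[
\|w\|_{L^2(\partial\Omega_\delta)}\le C\,\|\partial_\nu w\|_{H^{-1/2}(\partial O_R)}^{\alpha}\,\|\phi\|_{L^2(\partial\Omega)}^{1-\alpha},
\]
with $\mu=(1-\alpha)/\alpha$. A standard optimization over the Tikhonov parameter (a spectral cutoff of $A^*A$) then yields the approximant together with both bounds.

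\textbf{Step 3 (the exponent $\alpha\gtrsim\delta$).} This is the main obstacle. We prove the three-domain inequality by propagation of smallness. Near $\partial\Omega$ we use a Carleman or frequency-function argument adapted to the star-shaped geometry: the radial vector field $x\cdot\nabla$ gives a monotonicity (Almgren-type) frequency for Helmholtz on level sets $\{\rho(x)=t\}$ of the Minkowski gauge of $\Omega$. After that, we use a chain of interior three-ball inequalities carrying information from $\partial O_R$ down to the layer $\Omega_\delta\setminus\Omega$.

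For the circular case, an explicit computation with $J_n$ and $H^{(1)}_n$ and their large-$n$ asymptotics shows the following. For radii $r_0<r_1<r_2$, mode $n$ interpolates like $(r/r_2)^{|n|}$ up to $k$-dependent constants for $|n|\gtrsim kR$. This gives
\[
\alpha=\frac{\log(r_1/r_0)}{\log(r_2/r_0)},
\]
the harmonic measure, while the finitely many low modes are handled by the non-eigenvalue assumption. For general star-shaped $\Omega$, the interpolation across a layer of thickness $\delta$ loses a factor proportional to $\delta$ in the exponent, since $\log(1+\delta/\mathrm{diam})\sim\delta$. The remaining propagation to $\partial O_R$ costs only a fixed constant. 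Hence
\[
\alpha\ge c'\delta,\qquad \mu=\frac{1-\alpha}{\alpha}\le\frac{c}{\delta}.
\]
The hard part is ensuring that the constants in the frequency monotonicity are uniform in $\delta$ and that the $k^2$ term is absorbed. We control this using the smoothness of $\partial\Omega$ and the smallness of the layer.

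\textbf{Step 4 (convergence in $O_R$).} Finally, expanding the $H^{1/2}(\partial O_R)$ data in $e^{\mathrm{i}n\theta}$ gives
\[
u_\varepsilon=\sum_n \frac{\hat f_n}{J_n(kR)}\,J_n(k|x|)e^{\mathrm{i}n\theta},
\]
which is well defined because $J_n(kR)\neq0$ by the eigenvalue assumption. This series converges in $O_R$ and gives the stated form with $c_n=\hat f_n/J_n(kR)$.
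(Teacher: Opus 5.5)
\textbf{Verdict: your Step 3 is a genuine gap.} Your Step 2 is a legitimate reduction: it is the R\"uland--Salo duality, and the single-layer representation gives a clean pairing. But it moves the whole content of the theorem, the rate $\varepsilon^{-c/\delta}$, into Step 3, and Step 3 is asserted rather than proved.

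What you need there is a Cauchy-problem stability estimate $\|w\|_{L^2(\partial\Omega_\delta)}\le C\|\partial_\nu w\|_{H^{-1/2}(\partial O_R)}^{\alpha}\|\phi\|_{L^2(\partial\Omega)}^{1-\alpha}$, with $\alpha\ge c'\delta$ \emph{and} $C$ independent of $\delta$. It must hold at distance $\delta$ from $\partial\Omega$, where only an a priori bound on $w$ is available. The linear rate is a Hopf-type property of harmonic measure, and your sketch does not show how a Carleman or frequency argument would produce it. Three specific obstacles:
\begin{itemize}
\item Almgren/Garofalo--Lin monotonicity relies on $x\cdot\nu$ being constant on spheres and on the solution living on full balls about the center. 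On non-round gauge level sets the Rellich identity leaves uncontrolled terms, and your $w$ exists only between $\partial\Omega$ and $\partial O_R$.
\item Chaining three-ball inequalities toward $\partial\Omega$ multiplies the exponent by a fixed $\tau<1$ per step. Geometrically shrinking balls give only $\alpha\gtrsim\delta^{\kappa}$ with an uncontrolled $\kappa$, and balls of size $\delta$ give $\alpha\sim e^{-C/\delta}$; neither yields $\mu\lesssim 1/\delta$.
\item The $\delta$-uniformity of $C$, which you defer, is not cosmetic. A loss $C\sim\delta^{-m}$, typical of cutoffs in a layer of width $\delta$, becomes a factor $C^{1/\alpha}=\exp\bigl(\tfrac{m}{c'\delta}\log\tfrac1\delta\bigr)$ in the Runge bound, which is not of the form $C\varepsilon^{-c/\delta}$.
\end{itemize}
The remark $\log(1+\delta/\mathrm{diam})\sim\delta$ is only the concentric-circle computation. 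It does not transfer to a general star-shaped layer without a three-``dilate'' inequality that you have not established.

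The paper avoids general unique continuation altogether, and this is the idea your proposal lacks: it uses the single-layer density to \emph{localize}. It splits $u=\sum_i u_i$, with $u_i$ generated by $\mu|_{l_i}$ on $N=O(1/\delta)$ arcs of length $\le\delta/2$. Each $u_i$ is radiating outside $\overline O(y_{i,0},h-\delta/2)$, where $O(y_{i,0},h-\delta)$ is an exterior disc of $\Omega_\delta$ tangent at the midpoint of $l_i$, and $O(y_{i,0},h)$ still misses $\Omega$. Hence the only $\delta$-sensitive step is the explicit concentric Hankel-series estimate of Lemma~\ref{runge-ball}(ii) with $R=h$, $\sigma=h-\delta/2$, $\rho=h/3$, whose exponent is $\log 3/|\log(1-\tfrac{\delta}{2h})|\sim C/\delta$. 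The fixed-ratio disc translations of Lemma~\ref{move-forward} and the Graf re-expansion about the origin cost only $\delta$-independent factors. Lemma~\ref{estimate-u12} supplies a factor $\sqrt\delta$ that offsets the $\sqrt N$ lost when summing the pieces.

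A smaller point in your Step 2: $\mathcal S$ uses the free-space kernel while $w=0$ on $\partial O_R$. So Green's formula adds a term $\langle u|_{\partial O_R},\partial_\nu w\rangle_{\partial O_R}$ to your pairing identity. That term is bounded by $CM\|A^*\phi\|$ and is harmless.
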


%{\color{blue}
\begin{remark}
Applying [\cite{Kuttler1978}, Eq. (7)], the boundary $L^2$ error controls the interior error of the solution as
\[
\|u-u_\varepsilon\|_{L^2(\Omega)}\leqslant \frac{C_\Omega}{d}\|u-u_\varepsilon\|_{L^2(\partial\Omega)}
\]
where $d:=\min_j |k^2-E_j|/E_j$, $E_j$ are the Dirichlet eigenvalues of the domain and $C_\Omega$ is a domain-dependent constant.
%}
\end{remark}

The domains are illustrated in Fig.\ref{fig:domains-0}(a). The estimates are not trivial since $u$ is not assumed to be extended to the larger domain $O_R$. The norm of $u_\varepsilon$ is revealed to increase against $\varepsilon$ polynomially. The dependence of $\delta$ indicates that the further a solution in $\Omega$ can be extended (with larger $\delta$), the slower growth of the approximant. Moreover, the series expression facilitates numerical computation.

Similarly, for the exterior problem (Fig.\ref{fig:domains-0}(b)), we have the following quantitative approximation. 

\begin{theorem}\label{runge-approx-new-exterior}
   Assume that $\Omega$ is a shape domain with smooth boundary. $u$ is a radiating solution to the Helmholtz equation in $\mathbb{R}^2\setminus \overline{\Omega}$, and there exists $\mu\in L^2(\partial\Omega_\delta)$ with $\|\mu\|_{L^2(\partial\Omega_\delta)}<M$ such that $u=\int_{\partial\Omega_\delta}\Phi_k(x,y)\mu(y)\mathrm{d}y$. Let $\Omega_\delta\Supset \Omega$ be a smooth domain such that $dist(\partial\Omega_\delta,\Omega)\geq \delta$ and $D_\delta={\mathbb{R}^2\backslash \overline{\Omega}_\delta}$. Then for any $\varepsilon>0$, $u|_{D_\delta}$ can be approximated by a series converging in $\mathbb{R}^2\setminus \overline{O}_\rho$ with $O_\rho \Subset \Omega$ as
   \[
   u_\varepsilon (x) =\sum_{n\in\mathbb{Z}}c_n H_n^{(1)}(k|x|)e^{\mathrm{i}n\theta},
   \]
   such that
   \[
    \|u-u_\varepsilon|_{\partial \Omega_\delta}\|_{L^2(\partial\Omega_\delta)}\leq \varepsilon M, \quad \| u_\varepsilon  \|_{H^{1/2}(\partial O_\rho)}\leq C\varepsilon^{-\frac{c}{\delta} }M, 
   \]
   where $c$ and $C$ are constants independent of $u$ and $\delta$.
\end{theorem}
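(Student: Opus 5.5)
The exterior statement follows from the interior one by interchanging the roles of $J_n$ and $H^{(1)}_n$ in the Graf addition theorem. The first step is to expand the density representation. For $|x|>|y|$, Graf's formula gives
\[
\Phi_k(x,y)=\frac{\mathrm{i}}{4}H_0^{(1)}(k|x-y|)=\frac{\mathrm{i}}{4}\sum_{n\in\mathbb{Z}}H^{(1)}_n(k|x|)J_n(k|y|)e^{\mathrm{i}n(\theta_x-\theta_y)} .
\]
When $|y|\ge |x|$ the roles of $J_n$ and $H^{(1)}_n$ are swapped. Since $\partial\Omega_\delta$ is not contained in a disk centred at the origin, we cannot expand $u$ directly. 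Instead we follow the proof of Theorem \ref{runge-approx-new}: reduce to a three-circle configuration, then pass to general geometry by a finite chain of circles. This chain argument is what produces the exponent $c/\delta$.

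\textbf{Model case.} Take three concentric circles of radii $\rho<r_1<r_2$. Here $\partial\Omega_\delta$ plays the role of the outer data curve, the density lives on $|y|=r_1$, and $u_\varepsilon$ is measured on $|x|=\rho$.
\begin{itemize}
\item The unknown $u$ is the radiating field outside the circle $r_1$, so its Hankel coefficients are $a_n=\frac{\mathrm{i}}{4}\int J_n(k|y|)e^{-\mathrm{i}n\theta_y}\mu\,\mathrm{d}y$.
\item Truncate at $|n|\le N$ to define $u_\varepsilon=\sum_{|n|\le N}a_nH^{(1)}_n(k|x|)e^{\mathrm{i}n\theta}$.
\item The asymptotics $J_n(kr)\sim (kr/2)^{|n|}/|n|!$ and $H^{(1)}_n(kr)\sim -\mathrm{i}(|n|-1)!\,(2/kr)^{|n|}/\pi$ give $|a_n H_n^{(1)}(kr_2)|\lesssim (r_1/r_2)^{|n|}M/|n|$. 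The truncation error on $|x|=r_2$ is therefore $\lesssim (r_1/r_2)^N M$, and we choose $N\approx \log(1/\varepsilon)/\log(r_2/r_1)$.
\item On $|x|=\rho$ the $H^{1/2}$ norm is bounded by $\sum_{|n|\le N}|n|^{1/2}(r_1/\rho)^{|n|}M\lesssim (r_1/\rho)^N M=\varepsilon^{-\mu}M$, with $\mu=\log(r_1/\rho)/\log(r_2/r_1)$.
\item When $r_2-r_1\sim\delta$, this gives $\mu\sim C/\delta$.
\end{itemize}
The radiation condition is automatic because each term is an outgoing Hankel function.

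\textbf{General geometry.} Cover the exterior region by a finite collection of disks, or use the star-shaped/conformal-like structure as in Theorem \ref{runge-approx-new}.
\begin{itemize}
\item Write the density integral as a sum of pieces of $\partial\Omega_\delta$, each seen from a local centre $z_j\in\Omega$. Relative to $z_j$, the piece lies at radius $r_1^{(j)}$ and the far part of $\partial\Omega_\delta$ lies beyond radius $r_2^{(j)}$, with $r_2^{(j)}-r_1^{(j)}\gtrsim\delta$.
\item Each piece is the field of a density on a curve. It is radiating outside the disk $O(z_j,r_1^{(j)})$ and is approximated as above by off-centre Hankel series.
\item Re-expand each off-centre series about the origin with Graf's addition theorem $H_m^{(1)}(k|x-z|)e^{\mathrm{i}m\phi}=\sum_n H^{(1)}_{n}(k|x|)J_{n-m}(k|z|)e^{\mathrm{i}\cdots}$, valid for $|x|>|z|$. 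This stays inside $\mathbb{R}^2\setminus\overline{O}_\rho$ provided $O_\rho\Subset\Omega$ is chosen small enough.
\item The coefficient growth is multiplied only by a factor $e^{C N}$, which keeps the exponent of the form $c/\delta$.
\end{itemize}

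\textbf{Where the data norm is measured.} The error is measured on $\partial\Omega_\delta$, the curve carrying the density, so the piece $u$ near its own source curve requires care. I would handle this with the $L^2$-boundedness of the single-layer operator on smooth curves. Alternatively, shift the data curve slightly outward to $\partial\Omega_{\delta}$ with the density pushed inward, so the geometry has a margin $\delta/2$ on both sides.

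The main obstacle is this localisation: each piece must be separated from its centre by a margin uniformly comparable to $\delta$. The number of pieces and $\log$ of the ratios must stay bounded independently of $\delta$, so that $\mu=\log(\cdot)/\log(1+c\delta)\sim c/\delta$. I would first try the star-shapedness assumption of Theorem \ref{runge-approx-new} transplanted to $\Omega$, with radial dilation about a single centre: set $\rho$ as the inner radius and use $\partial\Omega_\delta\subset\{|x|\ge(1+c\delta)\,r(\theta)\}$ direction by direction. Using a single centre avoids the Graf re-expansion entirely, at the cost of an angular cutoff. That cutoff would be absorbed by the smoothness of $\partial\Omega$.
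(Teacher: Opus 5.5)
Your concentric three-circle computation is correct; it is essentially Lemma \ref{runge-ball}\,ii) with the density used directly. You are also right that the density has to sit strictly inside $\Omega_\delta$ with a margin, since the statement as written puts it on $\partial\Omega_\delta$ itself. The gap is in the passage to general geometry.

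Graf's addition theorem is an exact identity, so re-expanding about the origin cannot move singularities. A sum $\sum_{|m|\le N}a_mH^{(1)}_m(k|x-z_j|)e^{\mathrm{i}m\phi}$ is a radiating solution in $\mathbb{R}^2\setminus\overline{O}_\rho$ with finite $H^{1/2}(\partial O_\rho)$ norm only if $z_j\in O_\rho$. An infinite series needs essentially $O(z_j,\rho_j)\subset O_\rho$. So shrinking $O_\rho$ makes things worse, not better.

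If instead $z_j$ lies deep inside $\Omega$, your local configuration is impossible for a non-circular $\Omega$. No disk about $z_j$ can contain a density arc near the boundary while keeping all of $\partial\Omega_\delta$ outside a concentric disk only $\sim\delta$ larger. In the flower domain of the numerical example, a disk about the origin that reaches a petal tip already contains the troughs of $\partial\Omega_\delta$. Where $|x-z_j|<r_1^{(j)}$, the truncated series does not approximate $u_j$: its terms behave like $(r_1^{(j)}/|x-z_j|)^{|n|}/|n|$ and the partial sums diverge. The $L^2$-boundedness of the single-layer operator says nothing about this truncated series near its own source.

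So you need two things that cannot hold at once: centres near the boundary for the $\delta$-margin step, and centres in $O_\rho$ for the final expansion. The same objection kills your single-centre variant. A Hankel series about one point converges outside a disk, not outside a star-shaped region, and no angular cutoff changes that.

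The missing idea is the paper's Lemma \ref{move-forward}: move the singularity inward by a chain of Runge approximations, not by re-expansion.
\begin{itemize}
\item Partition the density curve into $O(1/\delta)$ arcs of length $\lesssim\delta$.
\item For each arc, take a disk of fixed radius $r$ tangent to the density curve from inside (a uniform interior-ball condition), and set $h=r+\delta$. Lemma \ref{runge-ball}\,ii) with $R=h$, $\sigma=h-\delta/2$, $\rho=h/3$ gives the only $\delta$-dependent exponent, $\alpha_0\sim C/\delta$.
\item Move the centre inward along $y_{i,0},\dots,y_{i,K}$ in steps of $h/3$, keeping each $O(y_{i,k},h)$ away from $D_\delta$, until the singularity lies in $O_\rho$. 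Each step is a new application of Lemma \ref{runge-ball} with radii $h,2h/3,h/3$, so its exponent $\alpha$ is independent of $\delta$.
\item At step $k$ the tolerance is tightened to $\varepsilon^{1+\alpha_{k-1}}$ to absorb the previous growth. This gives $1+\alpha_k=(1+\alpha)(1+\alpha_{k-1})$, hence $\alpha_K\lesssim(1+\alpha)^K/\delta$ with $K$ independent of $\delta$.
\end{itemize}
Thus the $1/\delta$ comes from the first step, not from the chain as your introduction says.

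Contrary to your requirement, the number of pieces cannot stay bounded as $\delta\to0$: a fixed-length arc cannot be enclosed by a disk whose boundary tracks the curve to within $\delta$. The paper handles the $O(1/\delta)$ pieces with the $\sqrt{\delta}$ bound of Lemma \ref{estimate-u12} together with Cauchy--Schwarz when summing over the pieces.
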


\begin{figure}[H]
    \centering
    \subfigure[Interior problem]{\includegraphics[height=2.0in]{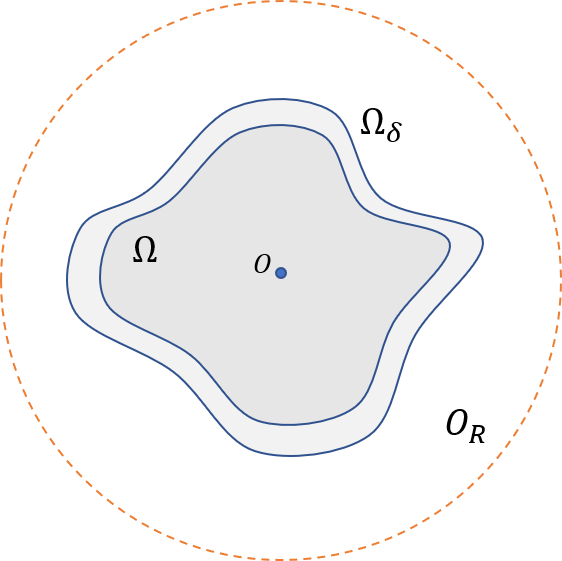}}
    \subfigure[Exterior problem]{\includegraphics[height=2.0in]{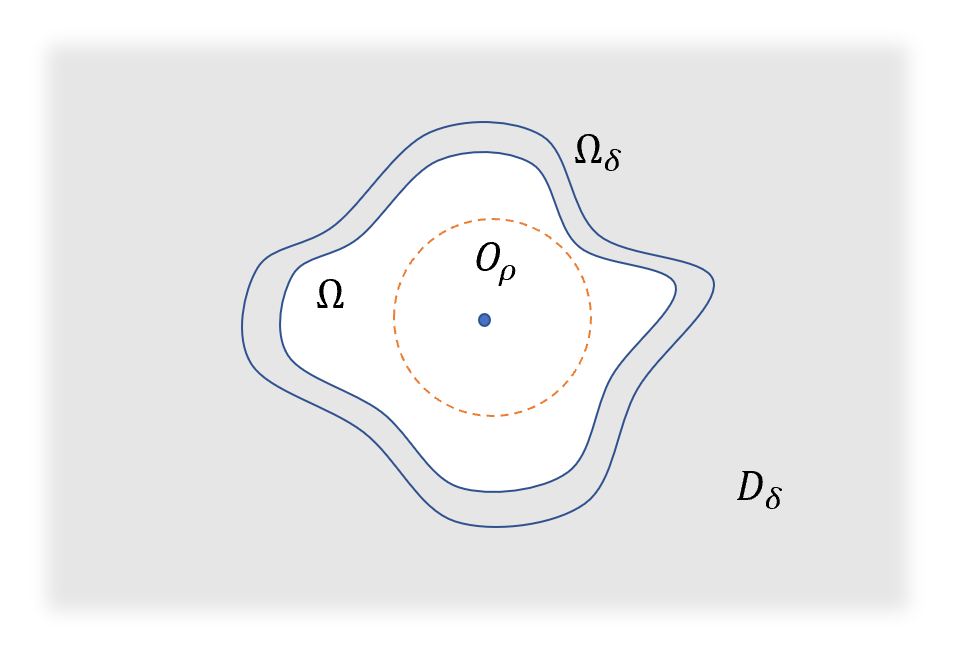}}
    \caption{Sketch of the domains}
    \label{fig:domains-0}
\end{figure}

To the authors' knowledge, existing results on quantitative Runge approximation usually consider cases with bounded domains. The present result regarding the exterior problem has potential applications in scattering problems, both theoretically and computationally.

Cauchy-Kovalevskaya theorem indicates that if the boundary is analytic with analytic data, the solution is extendable. For a domain with corners, in general the solution may not be extendable. Our future work will illustrate how to decompose the corner singularity and further extend the solution. Then, the present theory on the extendable case can be applied. 

\section{Proof of main result}

First, the following lemma gives the basic case of the exterior problem with circles.

\begin{lemma}\label{runge-ball}
        Assume that $u$ is the solution to 
        \begin{eqnarray}
		\Delta u + k^2 u & = & 0, \qquad \text{in} \quad \mathbb{R}^2 \setminus \overline{O}_R  \\
		u  & = & f(x)
		\qquad \text{on} \quad \partial O_R \\
		\lim_{r \rightarrow \infty} \sqrt{r} \left (\frac{\partial u}{\partial r} - \mathrm{i}k  u \right) & = & 0, \qquad r=|x|%\label{radiation-condition}
        \end{eqnarray}
        and that $f\in H^{1/2}(\partial O_R)$. Then there exists $\{c_n\}_{n\in\mathbb{Z}}$, such that
        \[
        u(x)=\sum_{n=-\infty}^{\infty} c_n H_n^{(1)}(kr) e^{\mathrm{i}n\theta}, \quad x\in \mathbb{R}^2\backslash \overline{O}_R.
        \]
        %where $c_n=\hat{f}_n/H_n^{(1)}(kR)$. 
        Moreover, for $\varepsilon\in (0,1)$,

        \noindent \textbf{i)} \ There exists a radiating solution $\tilde{u}$ in $\mathbb{R}^2\backslash \overline{O}_\rho $ with $\rho\in (0,R)$, such that
        \begin{equation}
        \|u-\tilde{u}|_{\partial O_R}\|_{L^2(\partial O_R)}\leq \varepsilon \|u\|_{H^{1/2}(\partial O_R)},\quad \|\tilde{u}\|_{H^{1/2}(\partial O_\rho)}\leq C e^{\left(\log\frac{R}{\rho}\right) \varepsilon^{-2}}\|u\|_{L^2(\partial O_R)}.
        \end{equation}
        
        \noindent \textbf{ii)}  If $u$ can be extended to $\mathbb{R}^2\backslash\overline{O}_\sigma$, $0<\rho<\sigma<R$, then there exists a radiating solution $\tilde{u}$ in $\mathbb{R}^2\backslash\overline{O}_\rho$, such that
        \begin{equation}
        \|u-\tilde{u}|_{\partial O_R}\|_{L^2(\partial O_R)}\leq \varepsilon \|u\|_{H^{1/2}(\partial O_\sigma)},\quad \|\tilde{u}\|_{H^{1/2}(\partial O_\rho)}\leq C \varepsilon^{-c{\frac{\log\frac{\rho}{R}}{\log \frac{\sigma}{R}}}}\|u\|_{L^2(\partial O_R)}.
        \end{equation}

    \end{lemma}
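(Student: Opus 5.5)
Expanding in the Fourier basis on circles reduces both estimates to bounds on Hankel-function ratios, and the approximant is then a truncated series. Writing $f=\sum_n \hat f_n e^{\mathrm{i}n\theta}$ on $\partial O_R$, the radiating solution is $u=\sum_n \hat f_n \frac{H_n^{(1)}(kr)}{H_n^{(1)}(kR)}e^{\mathrm{i}n\theta}$, so $c_n=\hat f_n/H_n^{(1)}(kR)$. This is well defined because $H_n^{(1)}(kR)\neq 0$: $J_n$ and $Y_n$ have no common zeros. Convergence for $r\ge R$ follows from the monotonicity of $|H_n^{(1)}(kr)|$ in $r$ (Nicholson's formula) together with $f\in H^{1/2}$. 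The approximant is the truncation $\tilde u=\sum_{|n|\le N} c_n H_n^{(1)}(kr)e^{\mathrm{i}n\theta}$. It is a radiating solution in $\mathbb{R}^2\setminus\{0\}$, in particular outside $\overline O_\rho$. The error is then a spectral tail, and the growth is a finite sum of Hankel ratios.

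For the error, on $\partial O_R$ we have $u-\tilde u=\sum_{|n|>N}\hat f_n e^{\mathrm{i}n\theta}$, so $\|u-\tilde u\|_{L^2(\partial O_R)}\le C(1+N^2)^{-1/4}\|f\|_{H^{1/2}}$. In case i) I choose $N\sim\varepsilon^{-2}$. For the growth, $\|\tilde u\|^2_{H^{1/2}(\partial O_\rho)}\approx\sum_{|n|\le N}(1+n^2)^{1/2}|\hat f_n|^2\,|H_n^{(1)}(k\rho)/H_n^{(1)}(kR)|^2$. The key lemma is a uniform ratio bound
\[
\left|\frac{H_n^{(1)}(k\rho)}{H_n^{(1)}(kR)}\right|\le C\left(\frac{R}{\rho}\right)^{|n|}.
\]
I would prove it by splitting into regimes. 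For $|n|\ge n_0\gg kR$, use the large-order asymptotics $H_n^{(1)}(z)\sim -\mathrm{i}\sqrt{2/(\pi n)}\,(ez/2n)^{-n}$, uniformly for $z\in[k\rho,kR]$, or sharper, the Debye expansions. For the finitely many $|n|<n_0$, the ratio is bounded by a constant depending on $k,\rho,R$. Then
\[
\|\tilde u\|_{H^{1/2}(\partial O_\rho)}\le C\sqrt{1+N^2}^{1/2}(R/\rho)^N\|f\|_{L^2}\le Ce^{\log\frac R\rho\,\varepsilon^{-2}}\|f\|_{L^2}
\]
after absorbing the polynomial factor, possibly at the cost of adjusting constants.

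In case ii), $u$ extends to $\mathbb{R}^2\setminus\overline O_\sigma$, with Fourier data $g_n$ on $\partial O_\sigma$. Then $\hat f_n=g_n H_n^{(1)}(kR)/H_n^{(1)}(k\sigma)$, and by the same ratio lemma in the opposite direction, $|\hat f_n|\le C(\sigma/R)^{|n|}|g_n|$. Hence the tail satisfies $\|u-\tilde u\|_{L^2(\partial O_R)}\le C(\sigma/R)^N\|u\|_{H^{1/2}(\partial O_\sigma)}$, and I choose $N=\log\varepsilon/\log(\sigma/R)$ (up to constants). The growth bound becomes $C N^{1/2}(R/\rho)^N=C N^{1/2}\varepsilon^{-\log(\rho/R)/\log(\sigma/R)}$. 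The factor $N^{1/2}$ is absorbed into the constant $c$ in the exponent, since $N\sim\log(1/\varepsilon)$.

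The main obstacle is the two-sided ratio lemma, uniform in $n$. It must hold in both directions: an upper bound $(R/\rho)^{|n|}$ for $|H_n(k\rho)/H_n(kR)|$, and a bound $(\sigma/R)^{|n|}$ for $|H_n(kR)/H_n(k\sigma)|$. The transition region $n\approx kr$ is where the asymptotic forms are delicate. I would first try monotonicity: $r\mapsto r^{n}|H_n^{(1)}(kr)|$ is nonincreasing for $n\ge 1/2$. This follows from Nicholson's integral $|H_n|^2=\frac{8}{\pi^2}\int_0^\infty K_0(2kr\sinh t)\cosh(2nt)\,dt$ together with a scaling argument, and it gives the first ratio bound directly for all $n$. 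For the reverse inequality, I would fall back on Debye asymptotics, with constants depending on $kR$. Finally, $H_{-n}=(-1)^nH_n$ handles negative $n$.
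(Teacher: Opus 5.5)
Your proposal follows essentially the same route as the paper: Fourier-expand the boundary data, truncate the Hankel series at $N\sim\varepsilon^{-2}$ in case i) and at $N\sim\log\varepsilon/\log\frac{\sigma}{R}$ in case ii), bound the growth by the ratio estimates $|H_n^{(1)}(k\rho)/H_n^{(1)}(kR)|\le C(R/\rho)^{|n|}$ and $|H_n^{(1)}(kR)/H_n^{(1)}(k\sigma)|\le C(\sigma/R)^{|n|}$ from large-order asymptotics, and absorb the $\sqrt{N}$ factor into the constants as the paper does. One correction to your side remark: $r\mapsto r^{n}|H_n^{(1)}(kr)|$ is nondecreasing, not nonincreasing (it grows like $r^{n-1/2}$ as $r\to\infty$); it is the nondecreasing property, which Nicholson's formula does give for integers $n\ge 1$, that yields the first ratio bound, while the direction you state would give the reverse inequality, and in any case your regime-splitting argument already suffices because for fixed $k,\rho,R$ the transition range $n\approx kr$ contains only finitely many $n$.
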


\begin{proof}
%Without loss of generality, we assume that $R\leq 1$ in the following. 
Denote the trace $g=\gamma u|_{\partial O_R}$ and $f(\theta), \,\theta\in[0,2\pi]$ the parameterization of $g$. Considering that $\{\mathrm{e}^{\mathrm{i}n\theta}\}_{n\in\mathbb{Z}}$ forms a complete basis for $L^2(0,2\pi)$, there is \[f(\theta)=\sum_{m\in \mathbb{Z}}\hat{f}_m e^{\mathrm{i}m\theta},\quad \theta\in [0,2\pi]; \quad   \hat{f}_m=\frac1{2\pi}\int_0^{2\pi}f(\theta)e^{-in\theta}\,\mathrm{d}\theta.\]
%Since $f\in H^{1/2}(\partial O_R)$,  
%\[
%2\pi N \sum_{|m|>N}|\hat{f}_m|^2\leq    2\pi\sum_{|m|>N}R\sqrt{1+\frac{m^2}{R^2}}|\hat{f}_m|^2\leq \|f\|^2_{H^{1/2}(\partial O_R)}
%\]
The Hankel functions $H_m^{(1)}(kr)$ naturally satisfy the radiation condition as $r\rightarrow +\infty$. Meanwhile, $H^{(1)}_m(kr)\neq 0, r>0$. Therefore, due to the uniqueness of the exterior scattering problem,
\[
u(\mbf{x})=\sum_{m\in\mathbb{Z}}C_mH_m^{(1)}(kr)e^{\mathrm{i}m\theta}, \quad C_m=\frac{\hat{f}_m}{H_m^{(1)}(kR)},\quad \mbf{x}\in \mathbb{R}^2\setminus \overline{O}_R.
\]
 Define 
        \[
        \tilde{u}_N(r,\theta )=\sum_{|m|\leq N} \frac{\hat{f}_m}{H_m^{(1)}(k R)}H_m^{(1)}(kr)e^{\mathrm{i}m\theta}.
        \]
         For $\varepsilon>0$, take $N=\left[ \frac{1}{\varepsilon^2}\right]+1$, then
        \begin{align*}
        \|u-\tilde{u}_N|_{\partial O_R}\|^2_{L^2(\partial O_R)} & =2\pi \sum_{|m|>N}\frac{1}{R\sqrt{1+\frac{m^2}{R^2}}}R\sqrt{1+\frac{m^2}{R^2}}|\hat{f}_m|^2  \\
        & \leq \frac{2\pi R}{N}\sum_{m\in\mathbb{Z}}\sqrt{1+\frac{m^2}{R^2}}|\hat{f}_m|^2\leq \varepsilon^2 \|g\|^2_{H^{1/2}(\partial O_R)},
        \end{align*}
        i.e., $\|u-\tilde{u}_N|_{\partial O_R}\|_{L^2(\partial O_R)}\leq \varepsilon \|g\|_{H^{1/2}(\partial O_R)}$. 
      Then, for the norm of $\tilde{u}$ on $\partial O_\rho$ with $\rho< R$, 
        \[
        \sum_{|m|\leq N}\sqrt{1+\frac{m^2}{\rho^2}}\left|\hat{f}_m\frac{H^{(1)}_m(k\rho)}{H^{(1)}_m(kR)}\right|^2\leq C_1 N \left(\frac{R}{\rho}\right)^{2N}\sum_{|m|\leq N}|\hat{f}_m|^2,
        \]
        where we have used $H_n^{(1)}(t)=\frac{2^n(n-1)!}{\pi \mathrm{i} t^n}(1+O(\frac{1}{n})), n\rightarrow \infty $ uniformly on compact subsets of $(0,+\infty)$.
        As a result,
        \[
        \|\tilde{u}_N\|_{H^{1/2}(\partial O_\rho)}\leq C\sqrt{N}e^{N\log \frac{R}{\rho}}\|g\|_{L^2(\partial O_R)}\leq C e^{c\left(\log\frac{R}{\rho}\right) \varepsilon^{-2}}\|g\|_{L^2(\partial O_R)}.
        \]

        For case ii), when $u$ is a solution to the exterior problem in $O_\sigma$, $0<\rho<\sigma<R$, we have proved above that there exist coefficients $\{C_m\}_{m\in \mathbb{Z}}$, such that
        \[
        u(x)=\sum_{m\in\mathbb{Z}}C_mH_m^{(1)}(kr)e^{\mathrm{i}m\theta},\quad x\in O_\sigma.
        \]
        Let $g=\gamma u|_{\partial O_\sigma}$ and $f(\theta)=g(\mbf{x}(\theta))$ its parameterization, then $\hat{f}(m)=C_mH_m^{(1)}(k\sigma)$. Define the finite sum
        \[
        \tilde{u}_N(x)=\sum_{|m|\leqslant N}C_m H_m^{(1)}(kr)e^{\mathrm{i}m\theta}, \quad C_m=\frac{\hat{f}(m)}{H_m^{(1)}(k\sigma)},
        \]
        which satisfies the Helmholtz equation in $\mathbb{R}^2\backslash\overline{O}_\rho$ and the radiation condition. Similarly, there exists constant $C>0$ such that
        \begin{align*}
        \|u-\tilde{u}_N|_{\partial O_R}\|_{L^2(\partial O_R)}^2 & =2\pi \sum_{|m|>N}\frac{1}{R\sqrt{1+\frac{m^2}{R^2}}}R\sqrt{1+\frac{m^2}{R^2}}\left|\frac{\hat{f}(m)}{H_m^{(1)}(k\sigma)}H_m^{(1)}(kR)\right|^2\\
        &\leq  \frac{C}{N}\left(\frac{\sigma}{R}\right)^{2N}\sum_{m\in \mathbb{Z}}\sqrt{1+\frac{m^2}{R^2}}|\hat{f}_m|^2%\leq \varepsilon^2\|g\|^2_{H^{1/2}(\partial O_R)}
        \end{align*}
        For the constant $C$ and any given $0<\varepsilon_0<1$, there exists a constant $c_1$ such that $\varepsilon^{c_1}\leq \frac{\varepsilon}{C}$, $\forall \varepsilon\in (0,\varepsilon_0)$. Then, for sufficiently large $N$ that $\left(\frac{\sigma}{R}\right)^N\leq \varepsilon^{c_1}\leq \frac{\varepsilon}{C}$, e.g., $N=[c_1\log \varepsilon/\log\frac{\sigma}{R}]+1$. %,  where $C, c_1>1$ are constants.
        This gives
        \[
        \|u-\tilde{u}_N|_{\partial O_R}\|_{L^2(\partial O_R)}\leq \varepsilon\|g\|_{H^{1/2}(\partial O_R)}.
        \]
        Then, for the increase of $\|\tilde{u}\|_{H^{1/2}(\partial O_\rho)}$ one has,
        \begin{align*}
        &\sum_{|m|\leqslant N}\sqrt{1+\frac{m^2}{\rho^2}}\left|\hat{f}_m\frac{H^{(1)}_m(k\rho)}{H^{(1)}_m(k\sigma)}\right|^2 =\sum_{|m|\leqslant N}\sqrt{1+\frac{m^2}{\rho^2}}\left|\hat{f}_m\frac{H^{(1)}_m(k\rho)}{H^{(1)}_m(kR)}\frac{H^{(1)}_m(kR)}{H^{(1)}_m(k\sigma)}\right|^2\\
        & \lesssim N\left(\frac{R}{\rho}\right)^{2N}\sum_{|m|\leqslant N}|\hat{f}_m|^2\left|\frac{H^{(1)}_m(kR)}{H^{(1)}_m(k\sigma)}\right|^2
        \lesssim e^{2c_2N\log\left(\frac{R}{\rho}\right)}\|u\|^2_{L^2(\partial O_R)}\\
        &\lesssim \exp\left( c\log\varepsilon\cdot\frac{\log\frac{R}{\rho}}{\log \frac{\sigma}{R}}\right)\|u\|^2_{L^2(\partial O_R)}\lesssim \varepsilon^{-c\frac{\log\frac{\rho}{R}}{\log \frac{\sigma}{R}}}\|u\|^2_{L^2(\partial O_R)}.
        \end{align*}
       % where we have used
       % \[
       % \exp\left(c\log C\cdot \frac{\log\frac{\rho}{R}}{\log \frac{\sigma}{R}}\right)\leq 1
       % \]
       % with constants $C>1, c>0$.
     This completes the proof.   
         \end{proof}

%Remark: The proof is similar to the interior problem case, by replacing the Hankel functions with Bessel ones and applying the corresponding asymptotic behaviors. Similarly, let $\delta =R-\sigma $, then there exists constant $C$ such that
%\[
%0<\frac{\log \frac{\rho}{R}}{\log\frac{\sigma}{R}}\leq \frac{C}{\delta}.
%\]

We remark that the function $\phi(\sigma,\theta)=\frac{\log \frac{\sigma}{R}}{\log\frac{\rho}{R}}$ in the exponential is the same as the harmonic measure in the annulus, satisfying
\begin{align*}
    \Delta \phi=0,&\quad \text{in}\quad O_R\setminus\overline{O}_\rho,\\
     \phi=1,&\quad \text{on} \quad \partial O_\rho,\\
     \phi = 0,& \quad \text{on}\quad \partial O_R.
\end{align*}
 
For the interior problem case, we have the corresponding estimate. 

   \begin{prop}\label{runge-ball0}
        Assume that $u$ satisfies 
        \begin{eqnarray}
		\Delta u + k^2 u & = & 0, \qquad \text{in} \quad O_R,  \\
		u  & = & f(x)
		\qquad \text{on} \quad \partial O_R, 
%\label{radiation-condition}
        \end{eqnarray}
        and that $f\in H^{1/2}(\partial O_R)$. Assume that $k^2$ is not a Dirichlet eigen value for $-\Delta$ on $O_R$. Then there exists $\{c_n\}_{n\in\mathbb{Z}}$, s.t. 
        \[
        u(x)=\sum_{n=-\infty}^{\infty} c_n J_n(kr) e^{\mathrm{i}n\theta}, \quad x\in O_R.
        \]
        %where $c_n=\hat{f}_n/H_n^{(1)}(kR)$. 
        Moreover, for $\varepsilon\in (0,1)$,

        \noindent \textbf{i)} \ There exists $\tilde{u}$ that satisfies the problem in $O_\rho $ with $\rho>R$, such that
        \begin{equation}
        \|u-\tilde{u}|_{\partial O_R}\|_{L^2(\partial O_R)}\leq \varepsilon \|u\|_{H^{1/2}(\partial O_R)},\quad \|\tilde{u}\|_{H^{1/2}(\partial O_\rho)}\leq C e^{\left(\log\frac{\rho}{R}\right) \varepsilon^{-2}}\|u\|_{L^2(\partial O_R)}.
        \end{equation}
        
        \noindent \textbf{ii)}  If $u$ can be extended to $O_\sigma$, $R<\sigma<\rho$, then there exists $\tilde{u}$ satisfying the equation in $O_\rho$, such that
        \begin{equation}
        \|u-\tilde{u}|_{\partial O_R}\|_{L^2(\partial O_R)}\leq \varepsilon \|u\|_{H^{1/2}(\partial O_\sigma)},\quad \|\tilde{u}\|_{H^{1/2}(\partial O_\rho)}\leq C \varepsilon^{-{\frac{\log\frac{\rho}{R}}{\log \frac{\sigma}{R}}}}\|u\|_{L^2(\partial O_R)}.
        \end{equation}

    \end{prop}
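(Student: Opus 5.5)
The plan is to mirror the proof of Lemma \ref{runge-ball}, with the Hankel functions replaced by Bessel functions $J_n$ and the direction of the radii reversed.

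First, the series representation. Expand the boundary datum as $f(\theta)=\sum_m \hat f_m e^{\mathrm{i}m\theta}$. Each $J_m(kr)e^{\mathrm{i}m\theta}$ solves the Helmholtz equation in all of $\mathbb{R}^2$. Since $k^2$ is not a Dirichlet eigenvalue on $O_R$, we have $J_m(kR)\neq 0$ for every $m$. By uniqueness of the interior Dirichlet problem this gives
\[
u=\sum_m C_m J_m(kr)e^{\mathrm{i}m\theta},\qquad C_m=\frac{\hat f_m}{J_m(kR)}.
\]
The tool that replaces the Hankel asymptotics is the large-order behaviour
\[
J_n(t)=\frac{t^n}{2^n n!}\bigl(1+O(\tfrac1n)\bigr),
\]
uniform for $t$ in compact subsets of $(0,\infty)$, together with $J_{-n}=(-1)^nJ_n$. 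Hence for $|m|$ large,
\[
\left|\frac{J_m(k\rho)}{J_m(kR)}\right|\approx\left(\frac{\rho}{R}\right)^{|m|}.
\]
For the finitely many small $|m|$ the ratio is bounded by a constant depending on $k,R,\rho$; this uses $J_m(kR)\neq0$ and is where the non-eigenvalue hypothesis enters. Together these give $|J_m(k\rho)/J_m(kR)|\le C(\rho/R)^{|m|}$ for all $m$.

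For part i), truncate: $\tilde u_N=\sum_{|m|\le N}C_mJ_m(kr)e^{\mathrm{i}m\theta}$ with $N=[\varepsilon^{-2}]+1$. On $\partial O_R$ the error is exactly the Fourier tail. Exactly as in the Lemma, its squared $L^2$ norm is at most $\frac{2\pi R}{N}\sum_m\sqrt{1+m^2/R^2}\,|\hat f_m|^2\le\varepsilon^2\|u\|_{H^{1/2}(\partial O_R)}^2$. On $\partial O_\rho$ the ratio bound gives
\[
\|\tilde u_N\|_{H^{1/2}(\partial O_\rho)}\le C\sqrt N(\rho/R)^N\|u\|_{L^2(\partial O_R)}\le Ce^{c\log(\rho/R)\varepsilon^{-2}}\|u\|_{L^2(\partial O_R)},
\]
where the factor $\sqrt N$ is absorbed into the exponential by adjusting constants.

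For part ii), write the coefficients relative to $\partial O_\sigma$ instead: $C_m=\hat f^\sigma_m/J_m(k\sigma)$, where $\hat f^\sigma_m$ are the Fourier coefficients of $u|_{\partial O_\sigma}$. The same argument shows $J_m(k\sigma)\neq 0$ for large $|m|$. For the finitely many small $|m|$ one can avoid $J_m(k\sigma)$ entirely by working with $C_mJ_m(kR)=\hat f_m$ directly.
\begin{itemize}
\item The tail on $\partial O_R$ is bounded by $\frac{C}{N}(R/\sigma)^{2N}\|u\|^2_{H^{1/2}(\partial O_\sigma)}$. Choose $N=[c_1\log\varepsilon/\log(R/\sigma)]+1$ so that $C(R/\sigma)^N\le\varepsilon$.
\item On $\partial O_\rho$, factor the ratio as $\frac{J_m(k\rho)}{J_m(kR)}\cdot\hat f_m$. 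This gives the bound $\lesssim N(\rho/R)^{2N}\|u\|^2_{L^2(\partial O_R)}$, which is at most $\varepsilon^{-c\log(\rho/R)/\log(\sigma/R)}\|u\|^2_{L^2(\partial O_R)}$ after substituting $N$.
\end{itemize}

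The main obstacle is making the Bessel ratio bounds uniform in $m$. The asymptotic formula is only valid for large $|m|$, while for small $m$ the values $J_m(kR)$ can be close to zero, so there one needs the non-eigenvalue assumption. The exponent in ii) then comes out up to the constant $c$, as in the Lemma, and the polynomial factors $\sqrt N$ have to be absorbed into it.
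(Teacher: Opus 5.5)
Your proposal is correct and follows the paper's appendix proof essentially step for step: the same expansion with $C_m=\hat f_m/J_m(kR)$, the same truncation levels $N\approx\varepsilon^{-2}$ and $N\approx\log\varepsilon/\log\frac{R}{\sigma}$, and the same large-order ratio bound $|J_m(k\rho)/J_m(kR)|\lesssim(\rho/R)^{|m|}$; your explicit handling of the low modes via $J_m(kR)\neq 0$ and of possible zeros of $J_m(k\sigma)$ in ii) is more careful than the paper. The constant $c$ you carry in the exponents is what the argument actually gives once the factor $\sqrt N$ is accounted for (the paper drops it into $C$ without justification); in ii) you could even recover the stated exponent $\gamma=\log\frac{\rho}{R}/\log\frac{\sigma}{R}$ by keeping the factor $1/N$ from the $H^{1/2}(\partial O_\sigma)$ norm when choosing $N$, since $\gamma>1$ then yields $\sqrt N(\rho/R)^N\lesssim\varepsilon^{-\gamma}N^{(1-\gamma)/2}\le\varepsilon^{-\gamma}$.
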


The proof is similar to the previous case and can be found in Appendix. 
Note that let $\delta =\sigma -R $, then there exists a constant $C$ such that
\[
0<\frac{\log \frac{\rho}{R}}{\log\frac{\sigma}{R}}=\frac{\log \frac{\rho}{R}}{\log\left(1+\frac{\delta}{R}\right)}\leq \frac{C}{\delta}.
\]

The following lemma gives the quantitative approximation of a solution outside a disk by the one outside another disk.
\begin{lemma}\label{move-forward}
Assume that $y_0,\cdots, y_K$ are uniformly spaced points along the straight line segment $L$ connecting $y_0$ and $y_K$, such that $|y_{k}-y_{k-1}|=h/3$, $k=1,\cdots, K$. $U=\{x\in \mathbb{R}^2, |x-y|<h, \forall y\in L\}$. 
Assume that $u$ satisfies Helmholtz equation in $\mathbb{R}^2\setminus \overline{O}(y_0,h/3)$ and Sommerfeld radiation condition at $\infty$ and
\[
\|u\|_{H^{1/2}(\partial O(y_0,h/3))}\leq M.
\]
Then $u$ can be approximated by
\[
u_{K}=\sum_{n\in\mathbb{Z}} c_{n} H_n^{(1)}(k|x-y_{K}|) e^{\mathrm{i}n\theta_{y_{K}}},
\]
which converges in $\mathbb{R}^2\setminus O(y_K,h/3)$, and such that
\[
\|u_K-u\|_{L^2(\partial U)}\leq \varepsilon M,\quad \|u_K\|_{L^2(\partial O(y_K,h/3))}\leq \varepsilon^{-\frac{C}{h^K}}M,
\]
for some constant $C$.
\end{lemma}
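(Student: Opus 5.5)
The plan is to move the center of the Hankel expansion from $y_0$ to $y_K$ in $K$ steps, applying Lemma \ref{runge-ball} ii) once per step, and then to chain the resulting estimates.

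\textbf{One step.} Suppose $u_{k-1}$ is a radiating solution outside $\overline{O}(y_{k-1},h/3)$. Since $|y_k-y_{k-1}|=h/3$, the closed disk $\overline{O}(y_{k-1},h/3)$ lies inside $O(y_k,2h/3)$. Hence, seen from the new center $y_k$, $u_{k-1}$ is a radiating solution outside the disk $O(y_k,\sigma)$ for every $\sigma\in(2h/3,h)$. We apply Lemma \ref{runge-ball} ii) with center $y_k$ and radii
\[
\rho=h/3,\qquad \sigma\ \text{slightly larger than } 2h/3 \ (\text{say } \sigma=3h/4),\qquad R \ \text{close to } h \ (\text{say } R=5h/6).
\]
This gives a radiating solution $u_k$ outside $\overline{O}(y_k,h/3)$, expanded in the Hankel series centered at $y_k$, such that
\[
\|u_k-u_{k-1}\|_{L^2(\partial O(y_k,R))}\le \varepsilon_k\,\|u_{k-1}\|_{H^{1/2}(\partial O(y_k,\sigma))}
\]
and
\[
\|u_k\|_{H^{1/2}(\partial O(y_k,h/3))}\le C\varepsilon_k^{-\mu}\,\|u_{k-1}\|_{L^2(\partial O(y_k,R))}.
\]
The exponent $\mu=c\log(\rho/R)/\log(\sigma/R)$ depends only on the ratios of the radii, so it is independent of $k$ and of scale.

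\textbf{Interior regularity and transferring the error.} The norms of $u_{k-1}$ on the circles $\partial O(y_k,\sigma)$ and $\partial O(y_k,R)$ have to be controlled by $\|u_{k-1}\|_{H^{1/2}(\partial O(y_{k-1},h/3))}$. Both circles stay a positive distance (of order $h$) away from $\overline{O}(y_{k-1},h/3)$. The exterior Dirichlet problem is well posed, and $H_n^{(1)}$ decays geometrically in $n$ when evaluated at a larger radius, so these norms are bounded by $C\|u_{k-1}\|_{H^{1/2}(\partial O(y_{k-1},h/3))}$.

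The errors must also be carried over to $\partial U$, and this is where I expect the main obstacle. On $\partial U$ we only know $u_k-u_{k-1}$ on a single circle, and the error is not a priori bounded there. The plan is to use that $u_k-u_{k-1}$ is itself a radiating solution outside $O(y_k,R)$; the difference is then controlled outside that disk by its boundary data, via uniqueness for the exterior problem. To make this work, $R$ must be chosen so that $O(y_k,R)$ avoids $\partial U$. If that is not possible, $U$ has to be shrunk, or a three-balls (Hadamard) interpolation step has to be used for the portion of $\partial U$ lying near $y_k$.

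\textbf{Chaining.} Choose the step errors by backward induction: take $\varepsilon_K\sim\varepsilon$, and pick $\varepsilon_{k-1}$ small compared with $\varepsilon_k$ divided by the amplification $C\varepsilon_{k}^{-\mu}$ produced at the later steps. Summing the telescoping errors $u_k-u_{k-1}$ gives $\|u_K-u\|_{L^2(\partial U)}\le\varepsilon M$. The norm of $u_K$ on $\partial O(y_K,h/3)$ is then a product of the $K$ growth factors, which gives a bound of the form $\varepsilon^{-C^K}M$. The constants carry powers of $h$ through the scaling of the $H^{1/2}$ norms and through the uniform asymptotics of the Hankel functions, which are only uniform on compact sets of $k r$. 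Tracking them through the induction produces the stated exponent $C/h^K$.
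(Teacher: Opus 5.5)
Your overall route is the paper's: apply Lemma~\ref{runge-ball}~ii) successively with centres $y_1,\dots,y_K$ and telescope $u_K-u=\sum_{k=1}^K(u_k-u_{k-1})$. The genuine problem is that your chaining of the tolerances runs in the wrong direction. The increment at step $k$ is bounded by $\varepsilon_k\|u_{k-1}\|_{H^{1/2}}$. The factor $\|u_{k-1}\|_{H^{1/2}(\partial O(y_{k-1},h/3))}\lesssim\prod_{j<k}C\varepsilon_j^{-\mu}M$ carries the growth produced by the \emph{earlier} steps. Nothing done at later steps acts on $u_k-u_{k-1}$, since the errors simply telescope on $\partial U$.

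Your choice is $\varepsilon_K\sim\varepsilon$ and $\varepsilon_{k-1}\sim\varepsilon_k^{1+\mu}$. Already for $K=2$ this gives $\varepsilon_1\sim\varepsilon^{1+\mu}$ and $\|u_1\|\lesssim\varepsilon^{-\mu(1+\mu)}M$. Hence you only get $\|u_2-u_1\|\lesssim\varepsilon^{1-\mu(1+\mu)}M$, which is far larger than $\varepsilon M$ and need not be small at all. (The final growth $\prod_{j\le K}\varepsilon_j^{-\mu}=\varepsilon^{-((1+\mu)^K-1)}$ comes out the same either way, so the defect shows only in the error bound.)

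The fix is the forward choice made in the paper. Take $\varepsilon_1=\varepsilon$, and if $\|u_{k-1}\|\lesssim\varepsilon^{-\alpha_{k-1}}M$, use relative accuracy $\varepsilon_k=\varepsilon^{1+\alpha_{k-1}}$ at step $k$. Then every increment is $\lesssim\varepsilon M$, and $1+\alpha_k=(1+\mu)(1+\alpha_{k-1})$. This is the paper's recursion $\alpha_k=\alpha+(1+\alpha)\alpha_{k-1}$, and it gives $\alpha_K=(1+\mu)^K-1$.

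Two smaller points. First, your radii $\sigma=3h/4$, $R=5h/6$ are in fact cleaner than the paper's $\sigma=2h/3$ (internally tangent to $\partial O(y_{k-1},h/3)$) and $R=h$ (tangent to $\partial U$). Reading $U$ as the $h$-neighbourhood of $L$, you have $O(y_k,5h/6)\subset U$, so $\partial U$ stays at distance at least $h/6$ from $\partial O(y_k,R)$. The transfer of each increment to $\partial U$, which you flag as the main obstacle, then follows from a routine bound for the radiating exterior Dirichlet problem away from the circle. No shrinking of $U$ and no three-balls step is needed; the paper just writes $\lesssim$ at this point.

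Second, your last sentence about recovering the exponent $C/h^K$ is an assertion rather than an argument. With a scale-invariant $\mu$ the recursion gives $(1+\mu)^K-1$, so any $h$-dependence would have to be extracted from the one-step constants. The paper instead obtains $C/h^K$ by taking the one-step exponent $\alpha\sim C/h$ and iterating the same recursion.
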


\begin{proof}Since $u$ satisfies the equation in $(O(y_{1},2h/3))^c \subset  (O(y_{0},h/3))^c$  (Fig.\ref{fig:placeholder2}),  we apply Lemma \ref{runge-ball} with radii $R=h$, $\sigma=2h/3 $, $\rho=h/3$ and the center $y_1$, there exist $\{c_{1,n}\}_{n\in\mathbb{Z}}$  such that $u$ can be approximated by
\[
u_{1}(x)=\sum_{n\in\mathbb{Z}}c_{1,n}H^{(1)}_n(k|x-y_{1}|)e^{\mathrm{i}n\theta_{y_{1}}},
\]
%in $\mathbb{R}^2\backslash \overline{O}(y_{i,2},h)$ uniformly with error $\varepsilon$ and bounded of order $\varepsilon^{-\mu}$ at $\partial O(y_{i,2},\delta)$. ( where we have assumed that $\|u\|_{H^{1/2}(\partial O(y_i,\delta))})\leq 1$ for normalization ). By the same argument,  $u_{i,2} $  can by approximated by series solution $u_{i,3}$ in $\mathbb{R}^2\backslash \overline{O}(y_{i,3},3\delta)\subset  \mathbb{R}^2\backslash \overline{O}(y_{i,2},2\delta)$ 
%
%with error $\varepsilon$.
with error
\[
\|u_{1}-u\|_{L^2(\partial O(y_{1},h))}\leq \varepsilon\|u\|_{H^{1/2}(\partial O(y_{0},h/3))}=:\varepsilon M
\]
while there exists a constant $\alpha\sim \frac{C}{h}$ such that 
\[
\|u_1\|_{H^{1/2}(\partial O(y_1,h/3))}\lesssim \varepsilon^{-\alpha}\|u\|_{L^2(\partial O(y_{1},h))}\lesssim \varepsilon^{-\alpha}\|u\|_{L^2(\partial O(y_{0},h/3))}\lesssim\varepsilon^{-\alpha}M.
\]

\begin{figure}[H]
        \centering
        \includegraphics[width=0.5\linewidth]{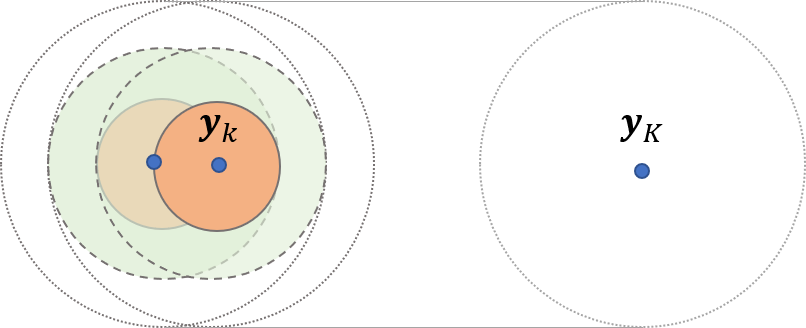}
        \caption{Sketch of the circle chain in the proof of Lemma 3.3.}
        \label{fig:placeholder2}
    \end{figure}

Next, since $u_1$ satisfies the equation in $(O(y_{2},2h/3))^c \subset  (O(y_{1},h/3))^c$,  we apply Lemma \ref{runge-ball} again with $R=h$, $\sigma=2h/3 $, $\rho=h/3$ and the center $y_2$, there exist $\{c_{2,n}\}_{n\in\mathbb{Z}}$  such that  $u$ can be approximated by
\[
u_{2}(x)=\sum_{n\in\mathbb{Z}}c_{2,n}H^{(1)}_n(k|x-y_{2})|)e^{\mathrm{i}n\theta_{y_{2}}},
\]
%in $\mathbb{R}^2\backslash \overline{O}(y_{i,2},h)$ uniformly with error $\varepsilon$ and bounded of order $\varepsilon^{-\mu}$ at $\partial O(y_{i,2},\delta)$. ( where we have assumed that $\|u\|_{H^{1/2}(\partial O(y_i,\delta))})\leq 1$ for normalization ). By the same argument,  $u_{i,2} $  can by approximated by series solution $u_{i,3}$ in $\mathbb{R}^2\backslash \overline{O}(y_{i,3},3\delta)\subset  \mathbb{R}^2\backslash \overline{O}(y_{i,2},2\delta)$ 
%
with relative error $\varepsilon^{1+\alpha_1}$, $\alpha_1=\alpha$. Then 
\[
\|u_{2}-u_{1}\|_{L^2(\partial O(y_{2},h))}\leq \varepsilon^{1+\alpha_1}\|u_{1}\|_{H^{1/2}(\partial O(y_{1},h/3))}\leq C\varepsilon^{-\alpha_1}\varepsilon^{1+\alpha_1}M=C\varepsilon M.
\]
This yields that
\begin{align*}
\|u-u_2\|_{L^2(\partial U)} & \leq\|u-u_1\|_{L^2(\partial U)}+\|u_1-u_2\|_{L^2(\partial U)}\\
& \lesssim \|u-u_1\|_{L^2(\partial O(y_1,h))}+\|u_1-u_2\|_{L^2(\partial O(y_2,h))}\lesssim \varepsilon M
\end{align*}
Meanwhile, the bound increase is 
\[
\|u_{2}\|_{\partial O(y_{2},h/3)}\leq C (\varepsilon^{1+\alpha_1})^{-\alpha}\|u_{1}\|_{L^2(\partial O(y_{1},2h/3))}= C \varepsilon^{-(1+\alpha_1)\alpha-\alpha_1}M=: C\varepsilon^{-\alpha_2}M.
\]

By a similar argument, $u_{2}$ can be approximated by $u_{3}$ in $(O(y_3,h))^c$ with error order $\varepsilon^{1+\alpha_2}$ while bounded of order 
\[\varepsilon^{-\alpha (1+\alpha_2)}\|u_{2}\|_{L^2(\partial O(y_2,2h/3))}\leq \varepsilon^{-\alpha (1+\alpha_2)-\alpha_2}M=:\varepsilon^{-\alpha_3}M.
\]
We have $\alpha_i=\alpha (1+\alpha_{i-1})+\alpha_{i-1}=\alpha+(1+\alpha)\alpha_{i-1}$.

Finally, $u_{K-1}$, and subsequently $u$, can be approximated by
\[
u_{K}=\sum_{n\in\mathbb{Z}} c_{K,n} H_n^{(1)}(k|x-y_{K}|) e^{\mathrm{i}n\theta_{y_{K}}},
\]
in $\mathbb{R}^2\setminus\overline{U}$ with the same order of error $\varepsilon M$ and bounded by $\varepsilon^{-\alpha_K}M$ for some constant $C$ and $\alpha_K\sim \frac{C}{h^K}$ depends on $h$.
\end{proof}

\

\begin{remark}
    By the above proof, the conclusion also holds for the case that $y_0,\cdots, y_K$ form a poly-line.
\end{remark}

%\[
%\int_0^L\int_0^L \ln^2 |x-y|\mathrm{d}x\mathrm{d}y
%\]

Now we move to the proof of the main results. 
As can be seen in the following, the main ideas for interior and exterior problems are the same. So we present details for the interior case.

\paragraph{Proof of Theorem 2.1}

\begin{figure}[H]
    \centering
    \includegraphics[width=0.5\linewidth]{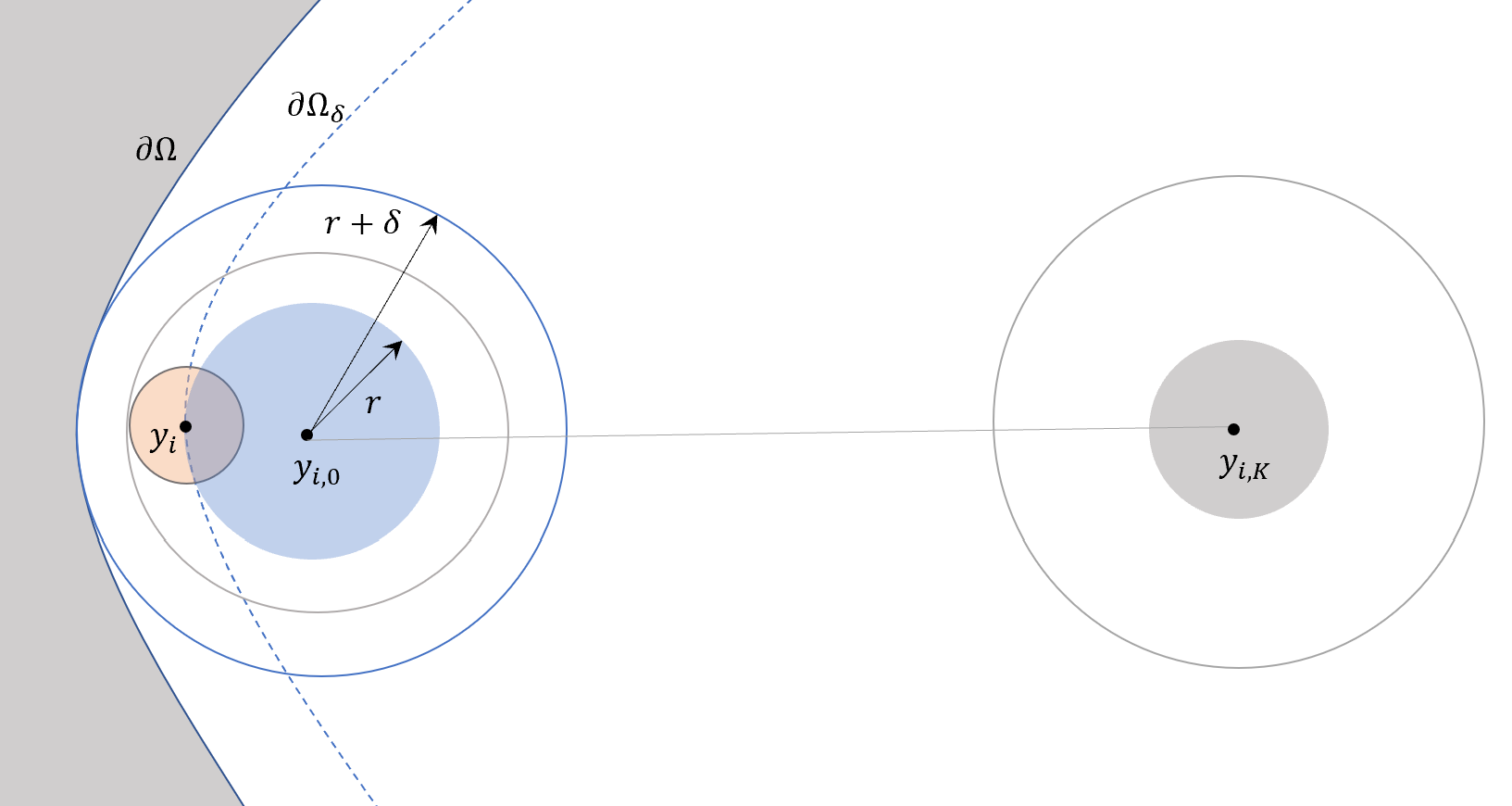}
    \caption{Sketch of domains in the proof of Theorem 2.1.}
    \label{sketch6}
\end{figure}

 \begin{proof}

By partitioning the smooth boundary curve \(\partial \Omega_
\delta\) into sub-arcs of length at most \(\delta/2\), we can have that $\partial\Omega_{\delta} =\cup_{i=1}^N l_i$ with $N=O(1/\delta)$ and $l_i\cap l_j=\emptyset $ for $i\neq j$. Denote $y_i\in \partial\Omega_\delta$ the arc length center of each $l_i$, then the sub-arc $l_i\Subset O(y_i,\delta/2)$ .  By smoothness of $\partial \Omega_{\delta}$, there exist $r>0$, and $y_{i,0}$ such that $O(y_{i,0},r)$ is an exterior ball of $\Omega_{\delta}$ tangent to $\partial\Omega_{\delta}$ at $y_i$, and such $r$ also uniformly holds for $i=1,\cdots, N$. Let $h=r+\delta$, then $O(y_{i,0},h)\subset \mathbb{R}^2\setminus \Omega$ according to the assumption on  $\Omega_{\delta}$ and $\Omega$.  

Consider
\[
u_i(x)=\int_{l_i}H_0^{(1)}(k|x-y|)\mu (y)\mathrm{d}y.
\]
It satisfies Helmholtz equation in $\mathbb{R}^2\backslash \overline{O}(y_{i,0},h-\delta/2)$. The geometries are as sketched in Fig. \ref{sketch6}.  Therefore, applying Lemma \ref{runge-ball} with $R=h$, $\sigma =h-\delta/2 $, $\rho=h/3$ (the conclusions in Lemma \ref{runge-ball} still holds for $\rho>\sigma$ or even $\rho=R$), $u_i(x)$ can be approximated by 
\[
u_{i,0}(x)=\sum_{n\in\mathbb{Z}}c^{i,0}_nH^{(1)}_n(k|x-y_i|)e^{\mathrm{i}n\theta _{y_{i,0}}},\quad x\in \mathbb{R}^2\backslash \overline{O}(y_{i,0},h/3),
\]
in $D_\delta=\mathbb{R}^2\backslash \overline{O}(y_{i,0},h)$ such that
\[
\|u_{i,0}-u_{i}\|_{L^2(\partial O(y_{i,0},h ))}\leq \varepsilon\|u_{i}\|_{H^{1/2}(\partial O(y_{i,0},h-\delta/2))} .%\leq C\varepsilon\|u\|_{H^{1/2}(\partial\Omega_\delta)}=:\varepsilon M.
\]
By Lemma \ref{estimate-u12}, %in which
%\[
%\|u_{i}\|_{H^{1/2}(\partial O(y_{i,0},h-\delta/4))}\leq C(\delta)\|\mu\|_{L^2(l_i)}\leq C(\delta)\|\mu\|_{\partial\Omega_{\delta/2}}\leq C(\delta)\|u\|_{H^{1/2}(\partial\Omega_\delta)}=:M.
%\]
\[
\|u_{i}\|_{H^{1/2}}(\partial O(y_{i,0},h-\delta/2))\leq C\|u_i\|_{H^{1/2}(\partial O(y_i,\delta/2))}\leq C\sqrt{\delta}\|\mu\|_{L^2(l_i)}=:\sqrt{\delta}M_{i,0}. 
\]

\noindent Therefore,
\[
\|u_{i,0}-u_{i}\|_{L^2(\partial O(y_{i,0},h ))}\leq \varepsilon \sqrt{\delta} M_{i,0}.
\]
Meanwhile, the bound increase is 
\[
\|u_{i,0}\|_{H^{1/2}(\partial O(y_{i,0},h/3))}\leq C \varepsilon^{-\alpha_0 }\|u_{i}\|_{L^2(\partial O(y_{i,0},h-\delta/2))}\leq C \varepsilon^{-C/\delta }\sqrt{\delta}M_{i,0}=:M_{i,1}.
\]
By the star-shaped geometry, there exists a line segment $L$ connecting $y_0$ and $y_K$, partitioned by $y_{i,0},y_{i,2},\cdots ,y_{i,K}$ so that $|y_{i,k}-y_{i,k-1}|=h/3$ and $y_{i,K-2}\in \mathbb{R}^2\setminus O(0,\rho_0 )$, where $\rho_0=\max_{x\in \partial\Omega_\delta}|x|$, and meanwhile  $O(y_{i,k},h)\subset \Omega^c$ (star-shaped).

According to Lemma \ref{move-forward}, there exists
\[
u_{i,K}=\sum_{n\in\mathbb{Z}} c_{n} H_n^{(1)}(k|x-y_{i,K}|) e^{\mathrm{i}n\theta_{y_{i,K}}},
\]
such that with $\alpha_0=\frac{C}{\delta}$ there is
\[
\|u_{i,K}-u_{i,0}\|_{L^2(\partial \Omega)}\leq \varepsilon^{1+\alpha_0} M_{i,1}=C\varepsilon \sqrt{\delta}M_{i,0},\]
and
\[
\|u_{i,K}\|_{L^2(\partial O(y_K,h/3))}\leq (\varepsilon^{1+\alpha_0})^{-\frac{C}{h^K}}M_{i,1}\leq \varepsilon^{-\alpha_0 \frac{C'}{r^K}-\alpha_0}\sqrt{\delta}M_{i,0}\leq \varepsilon^{-\frac{C(r)}{\delta}}\sqrt{\delta}M_{i,0},
\]
for some constant $C$. Notice that the exterior ball constant $r$ is dominantly affected by the curvature of $\partial \Omega$ so that $\delta$ is a higher order factor as $\delta\to 0$. %and  (the exterior ball constant for $\Omega_\delta$ is larger than $\Omega$??).

On the other hand, $u_{i,K}$ satisfies Helmholtz equation in $O(0,\rho_0+h/3) \subset (O(y_{i,K}, h/3))^c$. Then, by completeness of $\{e^{\mathrm{i}n\theta}\}_{n\in\mathbb{Z}}$ in $L^2(0,2\pi)$, $u_{i,K}$ can be expressed by
%one can apply the quantitative Runge approximation for interior problem, i.e., Lemma \ref{runge-ball0}, and yield that $u_{i,K}$ can be approximated by 
\[
\tilde{u}_i=\sum_{n\in\mathbb{Z}}c^i_n J_n(k|x|)e^{\mathrm{i}n\theta}.
\]
Consequently, $u$ can be approximated by
\[
\tilde{u}=\sum_{i=1}^N\tilde{u}_i=\sum_{n\in\mathbb{Z}}c_nJ_n(k|x|)e^{\mathrm{i}n\theta},
\]
in $\Omega$ with error 
 \begin{align*}
 \|u-\tilde{u}\|_{L^2(\partial \Omega)}%& = \int_{\partial\Omega} \left(\sum_{i=1}^N(u_i-\tilde{u}_i)\right)^2\leq \int_{\Omega}N\sum_{i=1}^N(u_i-\tilde{u}_i)^2\\%=N\sum_{i=1}^N\int_{\partial\Omega}(u_i-\tilde{u}_i)^2\\
 &\leq  \sum_{i=1}^N\|u_i-\tilde{u}_i\|_{L^2(\partial \Omega)} \leq \sqrt{N}\left(\sum_{i=1}^N\|u_i-\tilde{u}_i\|_{L^2(\partial \Omega)}^2\right)^{1/2} \\
 &\leq \sqrt{N}\left(\sum_{i=1}^N\varepsilon^2\delta M_{i,0}^2\right)^{1/2}=\frac{C}{\sqrt{\delta}}\varepsilon \left(\delta\sum_{i=1}^N\|
 \mu\|_{L^2(l_i)}^2\right)^{1/2}\\
 &= C\varepsilon M.%\frac{C}{\sqrt{\delta}}\varepsilon \sqrt{\delta}\|\mu\|_{L^2(\partial\Omega_{\delta/2})} %\leq\frac{C}{\sqrt{\delta}}\varepsilon \|u\|_{H^{1/2}(\partial\Omega_\delta)} 
 \end{align*}
and bounded on $\partial O_{\rho_0}$ by 
 \begin{align*}
\|\tilde{u}\|_{L^2(\partial O_{\rho_0})}& = \| \sum_{i=1}^N\tilde{u}_{i}\|_{L^2(\partial O_{\rho_0})}\leq \sqrt{N}\left(\sum_{i=1}^N\|\tilde{u}_i\|^2_{L^2\partial O(y_{i,K},h/3)}\right)^{1/2} \\
&\leq \frac{C}{\sqrt{\delta}}\varepsilon^{-\frac{C}{\delta}}\left(\delta\sum_{i=1}^N M_{i,0}^2\right)^{1/2}=C\varepsilon^{-\frac{C}{\delta}}M.%\|u\|_{H^{1/2}(\partial\Omega_\delta)}.
 \end{align*}
 %$\varepsilon M$ in $O(0,\rho_0)$, and bounded by $\varepsilon^{-\frac{c}{\delta}}M$ on $\partial O_{\rho_0}$  ($\Omega\subset O_{\rho_0}$). 

We have obtained the quantitative approximation by solutions on $O_{\rho_0}$. For those on $O_R$ with $R>\rho_0$, one can further applying Proposition \ref{runge-ball0} for $\tilde{u}$ with $\delta$-independent exponents involved and thus complete the proof.
\end{proof}

\;

\begin{remark}
 The case of the exterior problem (Theorem \ref{runge-approx-new-exterior}) can be proved in an almost similar way, while choosing the path $y_{i,0},\cdots,y_{i,K}\in \Omega$ in the inward direction.
\end{remark}

\begin{remark}
    For the case of non-star shape with simply connected domain, the proved can be adapted by using a poly-line connect $y_{i,0}$ to $y_{i,K}$. 
\end{remark}

%By applying Lemma 1, the proof of Theorem 1 can be directly extend to this case.

\section{Application in numerical computation}

In the following, we illustrate that with the Runge approximation, a solution that is continuable can be approximated by cylindrical harmonics with spectral accuracy.

\begin{theorem}\label{runge-part}
	With the assumptions in Theorem \ref{runge-approx-new-exterior}, further assume that $ O_\rho\Subset O_R\Subset\Omega\Subset \Omega_\delta $. Then, for sufficiently large $N$, there exist  $\{c_n\}_{n=-N}^N$ such that 
	\begin{equation}\label{num-series}
	u_N(x)=\sum_{n=-N}^N c_n \frac{H_n^{(1)} (kr)}{H_n^{(1)} (k\rho)}e^{\mathrm{i}n\theta}
	\end{equation}
approximates $u$ with  \[
	\| u-u_N \|_{L^2(\partial\Omega_\delta)}\leq  \left(\frac{\rho}{R}\right)^{N\cdot \frac{1}{1+\beta}} M,\; \text{and}\quad \left(\sum_{n=-N}^N\sqrt{1+n^2} |c_n|^2\right)^{\frac{1}{2}}\leq C  \left(\frac{R}{\rho}\right)^{N\cdot \frac{\beta}{1+\beta}}M,
	\]
	where $\beta=\frac{C}{\delta}$ for some constant $C$.
	
	%\[
	%\| h|_{\Omega}-u_N|_{\Omega}  \|_{L^2(\Omega)}\leqslant \varepsilon \|h\|_{H^1(\tilde{\Omega})}+\frac{C}{R^N}\|u_N\|_{L^2(\partial O_R)},\quad \text{and}\quad \|u_N\|_{H^{1/2}(\partial O_R)}\leq C \varepsilon^{-\beta}\|h|_{\Omega}\|_{L^2(\Omega)}.
	%\]
\end{theorem}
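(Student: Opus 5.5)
The plan is to combine Theorem \ref{runge-approx-new-exterior} with a truncation argument of the type used in Lemma \ref{runge-ball}, and then choose $\varepsilon$ as a function of $N$ so that the two resulting error terms balance.

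\textbf{Step 1: a full Hankel series from Theorem \ref{runge-approx-new-exterior}.} Apply Theorem \ref{runge-approx-new-exterior} with inner radius $\rho$ and a parameter $\varepsilon\in(0,1)$ to be fixed later. This gives
\[
u_\varepsilon=\sum_{n\in\mathbb{Z}}a_nH_n^{(1)}(kr)e^{\mathrm{i}n\theta}
\]
with $\|u-u_\varepsilon\|_{L^2(\partial\Omega_\delta)}\le \varepsilon M$ and $\|u_\varepsilon\|_{H^{1/2}(\partial O_\rho)}\le C\varepsilon^{-\beta}M$, where $\beta=c/\delta$. Set $b_n=a_nH_n^{(1)}(k\rho)$, which are the Fourier coefficients of the trace of $u_\varepsilon$ on $\partial O_\rho$. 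Then
\[
\Big(\sum_{n}\sqrt{1+n^2}\,|b_n|^2\Big)^{1/2}\lesssim \|u_\varepsilon\|_{H^{1/2}(\partial O_\rho)}\le C\varepsilon^{-\beta}M .
\]

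\textbf{Step 2: truncation.} Define $u_N$ by \eqref{num-series} with $c_n=b_n$ for $|n|\le N$. The coefficient bound follows directly from Step 1 and does not depend on $N$. The tail is $u_\varepsilon-u_N=\sum_{|n|>N}b_n\,H_n^{(1)}(kr)/H_n^{(1)}(k\rho)\,e^{\mathrm{i}n\theta}$, and we estimate it on $\partial\Omega_\delta$.
\begin{itemize}
\item Every point of $\partial\Omega_\delta$ satisfies $|x|\ge R$, because $O_R\Subset\Omega\Subset\Omega_\delta$.
\item The large-order asymptotics $H_n^{(1)}(t)\sim \frac{2^n(n-1)!}{\pi\mathrm{i}t^n}$, uniform on compact $t$-sets, give $|H_n^{(1)}(kr)/H_n^{(1)}(k\rho)|\le C(\rho/r)^{|n|}$.
\item By monotonicity in $r$, the same bound holds for all $r\ge R$, not only on compact sets. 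For $r$ large one can instead use that $|H_n^{(1)}(kr)|$ is decreasing in $r$ for $r$ beyond the turning point.
\end{itemize}
I would not estimate the $L^2(\partial\Omega_\delta)$ norm directly, since $\partial\Omega_\delta$ is not a circle. Instead I would bound the tail on $\partial O_R$ and then propagate it outward. On $\partial O_R$, orthogonality gives
\[
\|u_\varepsilon-u_N\|_{L^2(\partial O_R)}\le C(\rho/R)^N\varepsilon^{-\beta}M .
\]
To pass to $\partial\Omega_\delta$, I would use that the exterior Dirichlet problem outside $O_R$ is $L^2\to L^2$ bounded on any compact set outside $O_R$, since the mode multipliers $H_n^{(1)}(kr)/H_n^{(1)}(kR)$ are uniformly bounded for $r\ge R$. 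This yields
\[
\|u-u_N\|_{L^2(\partial\Omega_\delta)}\le \varepsilon M+C(\rho/R)^N\varepsilon^{-\beta}M .
\]

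\textbf{Step 3: choice of $\varepsilon$.} Balance the two terms by taking $\varepsilon=(\rho/R)^{N/(1+\beta)}$. Then $(\rho/R)^N\varepsilon^{-\beta}=(\rho/R)^{N/(1+\beta)}$, so the error is $\le 2(\rho/R)^{N/(1+\beta)}M$, and the coefficient bound becomes $C\varepsilon^{-\beta}M=C(R/\rho)^{N\beta/(1+\beta)}M$. The factor $2$ and any polynomial factors in $N$ can be absorbed for sufficiently large $N$ by slightly adjusting constants or the exponent. This is where the hypothesis ``sufficiently large $N$'' is used, and it also guarantees $\varepsilon<1$.

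\textbf{Main obstacle.} The delicate point is the uniformity of the Hankel ratio bound $|H_n^{(1)}(kr)/H_n^{(1)}(k\rho)|\le C(\rho/r)^{|n|}$ over all $n$ and all $r\ge R$, which is needed in the transfer from $\partial O_R$ to $\partial\Omega_\delta$. For small $|n|$ the ratio is merely bounded, which is harmless since only $|n|>N$ enters the tail. For large $|n|$ I would use the Debye/Nicholson monotonicity, namely that $r\mapsto r^{n}|H_n^{(1)}(kr)|$ is nondecreasing, or equivalently that $|H_n^{(1)}(kr)|$ is decreasing in $r$. If that fails to give a clean constant, I would fall back on the uniform asymptotic expansion on the compact range of $r$ between $R$ and $\max_{\partial\Omega_\delta}|x|$, and accept an $N$-polynomial loss absorbed as in Step 3.
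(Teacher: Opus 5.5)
Your proposal is correct and follows essentially the same route as the paper: apply Theorem \ref{runge-approx-new-exterior}, rescale the coefficients by $H_n^{(1)}(k\rho)$, truncate at $|n|=N$, bound the tail on $\partial\Omega_\delta$ by $C(\rho/R)^N\varepsilon^{-\beta}M$, and balance with $\varepsilon=(\rho/R)^{N/(1+\beta)}$. The only difference is in transferring the tail to $\partial\Omega_\delta$. The paper bounds it directly in sup-norm on $\{|x|\ge R\}$ by Cauchy--Schwarz on $\sum_{|n|>N}|c_n|\,|H_n^{(1)}(kR)/H_n^{(1)}(k\rho)|$, implicitly using the monotonicity of $|H_n^{(1)}|$ that you flag. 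Your detour through $L^2(\partial O_R)$ still needs that pointwise step, or interior regularity, to reach the non-circular curve $\partial\Omega_\delta$, since bounded multipliers alone only control $L^2$ on circles. On the other hand, your use of the $H^{1/2}$ bound for the weighted coefficient sum is slightly more careful than the paper's $L^2$ bound on $\partial O_\rho$.
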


\begin{proof}
	By the quantitative Runge's approximation in Theorem \ref{runge-approx-new-exterior}, for $\forall \varepsilon>0$ there exists $\{C_n\}_{n\in\mathbb{Z}}$,
    \[
    u_\varepsilon =\sum_{n\in\mathbb{Z}}C_nH_n^{(1)}(kr)e^{\mathrm{i}n\theta}
    \]
satisfying Helmholtz equation in $\mathbb{R}^2\setminus \overline{O}_\rho$  such that 
	\[
	\| u-u_\varepsilon \|_{L^2(\partial \Omega_\delta)}\leq \varepsilon  M, \quad \text{and}\quad \|u_\varepsilon\|_{H^{1/2}(\partial O_\rho)}\leq C \varepsilon^{-\beta}M,
	\]
where $\beta=\frac{C}{\delta}$ for some constant $C$. 	Let $c_n=C_nH_n^{(1)}(k\rho)$, the series read
   \[
    u_\varepsilon=\sum_{n\in\mathbb{Z}}c_n\frac{H_n^{(1)}(kr)}{H_n^{(1)}(k\rho)}e^{\mathrm{i}n\theta},
    \]
   which converges in $\mathbb{R}^2\setminus\overline{O}_\rho$.  Take 
	\[
	u_N(x)=\sum_{n=-N}^{N} c_n\frac{H_n^{(1)}(kr)}{H_n^{(1)}(k\rho)}e^{\mathrm{i}n\theta},
	\]
	then for $\rho<R$ and $O_R\Subset\Omega_\delta$, 
	\begin{align}
		\|u_\varepsilon-u_N\|_{L^2(\partial \Omega_\delta )} & \leq C\|u_\varepsilon-u_N\|_{L^\infty(O_R )}\leq C \sum_{|n|=N+1}^\infty |c_n\frac{H_n^{(1)}(k R)}{H_n^{(1)}(k\rho)}| \notag \\
		& \leq C \left(\sum_{|n|=N+1}^\infty |c_n|^2 \sum_{|n|=N+1}^\infty \frac{\rho^{2n}}{R^{2n}}  \right)^{1/2}\leq C \|u_\varepsilon\|_{L^2(\partial O_\rho)}\cdot \left(\frac{\rho}{R}\right)^N.\label{approx-uN}
	\end{align}
	Meanwhile,
	%\[
	%\|u_\varepsilon\|_{L^2(\partial O_\rho)}\leq \|u_\varepsilon\|_{H^{1/2}(\partial O_\rho)}
	%\leq C \varepsilon^{-\beta}M, 
	%\]
	%and similarly
	\begin{equation}\label{norm-uN}
	\|u_N\|_{L^2 (\partial O_\rho)}\leq \|u_\varepsilon\|_{L^2 (\partial O_\rho)}\leq C \varepsilon^{-\beta} M.
	\end{equation}
	As a result,
	\begin{align*}
		\| u-u_N|_{\partial\Omega_\delta}  \|_{L^2(\partial\Omega)} & \leqslant \|u-u_\varepsilon \|_{L^2(\partial\Omega_\delta)}+ \|u_\varepsilon-u_N \|_{L^2(\partial\Omega_\delta)}\\
		& \leq \varepsilon M+C\left(\frac{\rho}{R}\right)^N\varepsilon^{-\beta}M.
	\end{align*}
	Due to the arbitrariness of $\varepsilon$, taking $\varepsilon \sim (\rho/R)^{N\cdot \frac{1}{1+\beta}} $ yields
	\[
	\| u-u_N  \|_{L^2(\partial\Omega_\delta)}\leq C \left(\frac{\rho}{R}\right)^{N\cdot \frac{1}{1+\beta}}M.
	\]
	Substituting $\beta=\frac{C}{\delta}$ gives the result.
\end{proof}

The following proposition states that, for the analytic boundary case, the collocation method gives numerical results with exponential convergence.

\begin{prop}\label{num-ana}
    Follow the assumptions and notations in Theorem \ref{runge-part}. Further assume that $\partial\Omega$ is analytic parameterized by an analytic function $r(\theta)$, and $\partial\Omega_{\delta}$ is parameterized by $r_\delta(\theta)=r(\theta)+\delta$ (in this setting $dist(\Omega,\partial \Omega_\delta)\geq C\delta$ for some $C>0$).    
    Let $g(\theta)=u|_{\partial\Omega_\delta}$, $g_j=g(\theta_j)$, $\theta_j=\frac{2\pi j}{J}, j=0,\cdots,J-1$. Let $\mbf{c}_*=\{c^*_n\}_{n=-N}^N$ be the minimizer to
    \[
    F(\mbf{c})=\frac{1}{J}\sum_{j=0}^{J-1}|g_j-u_{(\mbf{c})}(\theta_j)|^2+\alpha \sum_{n=-N}^N|c_n|^2, 
    \]
    where $u_{(\mbf{c})}$ is the finite sum as \eqref{num-series}. Then
    \[
    \|u_{(c_*)}-u\|_{L^2(\partial\Omega_\delta)}\leq C\left(\left(\frac{\rho}{R}\right)^{N}+e^{-aJ}\right)^\kappa M,
    \]
    provided that the regularization parameter $\alpha\sim \left(\frac{\rho}{R}\right)^{2N}+e^{-2aJ} $. $C,a>0$ are constants depending on $\Omega$. $\kappa=\frac{1}{1+\beta}$ with $\beta\lesssim\frac{1}{\delta}$.
\end{prop}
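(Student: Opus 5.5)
The plan is to compare the regularized discrete least-squares minimizer with the good candidate coefficient vector supplied by Theorem \ref{runge-part}, and then convert the discrete (collocation) error into the continuous $L^2(\partial\Omega_\delta)$ error.

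\textbf{Step 1: the candidate.} Let $\mbf{c}^{0}=\{c_n\}_{n=-N}^N$ be the coefficients from Theorem \ref{runge-part}, with $\varepsilon\sim(\rho/R)^{N/(1+\beta)}$. They satisfy
\[
\|u-u_{(\mbf{c}^0)}\|_{L^2(\partial\Omega_\delta)}\le C(\rho/R)^{N\kappa}M, \qquad |\mbf{c}^0|\le C(R/\rho)^{N\beta\kappa}M .
\]

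\textbf{Step 2: quadrature error.} By analyticity, $g$ and each basis function $H^{(1)}_n(kr_\delta(\theta))e^{\mathrm{i}n\theta}/H^{(1)}_n(k\rho)$ are analytic and $2\pi$-periodic in $\theta$ on a strip of width $a>0$. Since $u$ is a layer potential with density on $\partial\Omega_\delta$, I would instead use a slightly enlarged curve, or use the fact that $u$ extends analytically across $\partial\Omega_\delta$ away from the density. On such a strip the trapezoidal rule for $|g-u_{(\mbf{c})}|^2$ has error at most $Ce^{-aJ}\sup_{\text{strip}}|g-u_{(\mbf{c})}|^2$, uniformly in $\mbf{c}$ up to the norm of $\mbf{c}$.

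\textbf{Step 3: compare with the minimizer.} From $F(\mbf{c}_*)\le F(\mbf{c}^0)$ we get
\[
F(\mbf{c}_*)\lesssim (\rho/R)^{2N\kappa}M^2+e^{-2aJ}\,\mathrm{poly}(|\mbf{c}^0|)+\alpha|\mbf{c}^0|^2 .
\]
With $\alpha\sim(\rho/R)^{2N}+e^{-2aJ}$, the term $\alpha|\mbf{c}^0|^2$ is of order $\bigl((\rho/R)^N+e^{-aJ}\bigr)^{2}(R/\rho)^{2N\beta\kappa}M^2$. This term is the delicate one: the claimed rate requires $\varepsilon$ to be rebalanced jointly against $\eta:=(\rho/R)^N+e^{-aJ}$. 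So I would rerun the argument of Theorem \ref{runge-part} with $\varepsilon\sim\eta^{\kappa}$, giving candidate error $\lesssim\eta^\kappa M$ and $|\mbf{c}^0|\lesssim\eta^{-\beta\kappa}M$. Then $\alpha|\mbf{c}^0|^2\lesssim\eta^{2-2\beta\kappa}M^2=\eta^{2\kappa}M^2$, since $1-\beta\kappa=\kappa$. Hence both the discrete residual and $\alpha|\mbf{c}_*|^2$ are $\lesssim\eta^{2\kappa}M^2$. In particular $|\mbf{c}_*|\lesssim\eta^{\kappa}\alpha^{-1/2}M\sim\eta^{\kappa-1}M=\eta^{-\beta\kappa}M$.

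\textbf{Step 4: discrete to continuous.} Write
\[
\|u_{(\mbf{c}_*)}-u\|^2_{L^2(\partial\Omega_\delta)}\le \frac{2\pi}{J}\sum_j|g_j-u_{(\mbf{c}_*)}(\theta_j)|^2+\text{quadrature error}.
\]
The quadrature error is controlled by $e^{-2aJ}$ times the strip sup-norm of $g-u_{(\mbf{c}_*)}$. That sup is bounded by $CM+C|\mbf{c}_*|\sup_n|H_n^{(1)}(kr)/H^{(1)}_n(k\rho)|$ on the strip, which is $\lesssim M+\eta^{-\beta\kappa}M$ as long as the strip stays outside $O_\rho$, i.e.\ shrinking $a$ so that the complexified radius has modulus larger than $\rho$.

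\textbf{Main obstacle.} The hard part is this last step. The product $e^{-2aJ}\eta^{-2\beta\kappa}$ must be $\lesssim\eta^{2\kappa}$, which needs $e^{-aJ}\le\eta^{\kappa+\beta\kappa}=\eta$. This holds automatically because $\eta\ge e^{-aJ}$. Along the way the strip-width constant $a$ has to be chosen uniformly over the basis functions, which requires Hankel-ratio bounds $|H_n^{(1)}(kz)/H_n^{(1)}(k\rho)|\lesssim(\rho/|z|)^{|n|}$ for complex $z$ near $r_\delta(\theta)$. These follow from the large-order asymptotics already used in Lemma \ref{runge-ball}.
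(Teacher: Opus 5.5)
Your argument is correct and has the same skeleton as the paper's proof. The candidate is the truncated Runge approximant with $\varepsilon\sim\eta^{\kappa}$; the rebalanced choice you reach in Step 3 is exactly the paper's. The comparison $F(\mbf{c}_*)\le F(\mbf{c}^0)$ with $\alpha\sim\eta^2$ gives a nodal residual $\lesssim\eta^{2\kappa}M^2$ and $|\mbf{c}_*|\lesssim\eta^{-\beta\kappa}M$ through $1-\beta\kappa=\kappa$. Exponential convergence of the trapezoidal rule then converts nodal sums into $L^2(\partial\Omega_\delta)$ norms.

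Where you differ is what the quadrature is applied to.
\begin{itemize}
\item You apply it to $|g-u_{(\mbf{c})}|^2$ for both $\mbf{c}^0$ and $\mbf{c}_*$. This bounds $\|u_{(\mbf{c}_*)}-u\|$ in one stroke, but it needs $g$ itself to continue holomorphically to a strip with $\sup|g|\lesssim M$ there. That bound is an extra input not supplied by the hypotheses as stated; this is where your layer-potential worry enters. The strip width, hence $a$, is then limited by how far $u$ continues past $\partial\Omega_\delta$ toward $\Omega$, so it is of order $\delta$.
\item The paper applies the trapezoidal estimate only to $u_N-u_{(\mbf{c}_*)}$. This is a finite cylindrical-harmonic sum, analytic outside $O_\rho$ and bounded by $C\mathcal{M}$ via the coefficient bounds, so $a$ depends only on $r(\theta)$ and $\mathrm{dist}(\partial\Omega_\delta,\partial O_\rho)$. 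The candidate's nodal residual $\frac1J\sum_j|g_j-u_N(r_j,\theta_j)|^2$ is handled by interior regularity: it is $\le C\|u-u_N\|^2_{L^2(\partial\Omega_{\delta/2})}$. That is why the paper runs its Runge step on $\partial\Omega_{\delta/2}$; the $\delta$-dependence goes into $C$, and the proof closes with a triangle inequality through $u_N$.
\end{itemize}
You can remove your extra assumption on $g$ with the same device. Bound $\sup_{\mathrm{strip}}|g-u_{(\mbf{c}^0)}|$ by interior analyticity in terms of $\|u-u_{(\mbf{c}^0)}\|_{L^2(\partial\Omega_{\delta/2})}\lesssim\eta^{\kappa}M$, and bound $\sup_{\mathrm{strip}}|u_{(\mbf{c}^0)}-u_{(\mbf{c}_*)}|$ by $|\mbf{c}^0|+|\mbf{c}_*|$.

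Two small corrections.
\begin{itemize}
\item In Step 4, the factor $e^{\mathrm{i}nz}$ contributes $e^{|n||\mathrm{Im}\,z|}$ on the strip. Geometric decay of $H_n^{(1)}(kr_\delta(z))e^{\mathrm{i}nz}/H_n^{(1)}(k\rho)$ therefore requires roughly $\rho\,e^{a}<\min_\theta r_\delta(\theta)$, not merely $|r_\delta(z)|>\rho$. With that condition, and Cauchy--Schwarz over $n$ rather than $\sup_n$, you get $\sup_{\mathrm{strip}}|u_{(\mbf{c})}|\lesssim|\mbf{c}|$ uniformly in $N$.
\item Your final check $e^{-2aJ}\eta^{-2\beta\kappa}\lesssim\eta^{2\kappa}$ uses a quadrature error of $e^{-2aJ}$, whereas Step 2 states $e^{-aJ}$. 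Take the $a$ in $\eta$ to be half the strip constant so that the two match.
\end{itemize}
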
 

\begin{proof}

By the quantitative Runge approximation, for $\forall \varepsilon>0$, there exists a solution $u_\varepsilon$ that satisfies Helmholtz equation in $\mathbb{R}^2\setminus\overline{O}_\rho$ such that,
\[
\|u-u_\varepsilon\|_{L^2(\partial\Omega_{\delta/2})}\leq \varepsilon M, \quad \text{and} \quad \|u_\varepsilon\|_{H^{1/2}(\partial O_\rho)}\leq C \varepsilon^{-\beta}M=:\mathcal{M}, 
\]
where $\beta\lesssim\frac{1}{\delta}$ and $\partial\Omega_{\delta/2}$ is parameterized by $r(\theta)+\delta/2$. 

For $u_\varepsilon$, by \eqref{approx-uN} and \eqref{norm-uN}, there exists $\{\tilde{c}_n\}_{n=-N}^N$ such that
\begin{equation}%\label{num-series}
	u_N(x)=\sum_{n=-N}^N \tilde{c}_n \frac{H_n^{(1)} (kr)}{H_n^{(1)} (k\rho)}e^{\mathrm{i}n\theta},
\end{equation}
converges to $u_\varepsilon$ with
\[
\|u_\varepsilon-u_N\|_{L^2(\partial\Omega_{\delta/2})}\lesssim \left(\frac{\rho}{R}\right)^{N}\mathcal{M},\quad \left(\sum_{n=-N}^N |\tilde{c}_n|^2\right)^{\frac{1}{2}}\asymp\|u_N\|_{L^2(\partial O_\rho)}\leq\mathcal{M}. 
\]
Denote $r_j=r(\theta_j)+\delta$. Denote $\eta^2=\left(\frac{\rho}{R}\right)^{2N}+e^{-2aJ}$ and take $\varepsilon=\eta^{\frac{1}{1+\beta}}$, which gives $\mathcal{M}\lesssim\eta^{-\frac{\beta}{1+\beta}}M$. Combining interior regularity we have
\begin{align}
&\frac{1}{J}\sum_{j=0}^{J-1}|g_j-u_N(r_j,\theta_j)|^2  \leq C\|u-u_N\|^2_{L^2(\partial\Omega_{\delta/2})}\notag \\
\leq & C \left( \|u-u_\varepsilon\|^2_{L^2(\partial\Omega_{\delta/2})}+\|u_\varepsilon-u_N\|^2_{L^2(\partial\Omega_{\delta/2})}\right)\notag\\
\leq & C\left(\varepsilon^2M^2+\left(\frac{\rho}{R}\right)^{2N}\mathcal{M}^2\right)\leq C\left(\varepsilon^2M^2+\eta^2\varepsilon^{-2\beta}M^2  \right)=C \eta^{\frac{2}{1+\beta}}M^2\label{u-uN}.
\end{align}
By definition of the minimizer and choice of $\alpha\sim \eta^2$, there is
    \begin{align}
    \frac{1}{J}\sum_{j=0}^{J-1}|g_j-u_{(c_*)}(r_j,\theta_j)|^2 & \leq \frac{1}{J}\sum_{j=0}^{J-1}|g_j-u_N(r_j,\theta_j)|^2+\alpha \sum_{n=-N}^N |\tilde{c}_n|^2\notag \\
    &\lesssim  \eta^{\frac{2}{1+\beta}}M^2+\alpha \varepsilon^{-2\beta}M^2 
    \lesssim \eta^{\frac{2}{1+\beta}}M^2.\label{residual}
    \end{align}
    Meanwhile, 
    \begin{equation}\label{bound-c}
    \sum_{n=-N}^N|c_n^*|^2\leq \frac{1}{\alpha}\cdot\frac{1}{J}\sum_{j=0}^{J-1}|g_j-u_N(r_j,\theta_j)|^2+ \sum_{n=-N}^N |\tilde{c}_n|^2\lesssim \eta^{-\frac{2\beta}{1+\beta}} M^2\lesssim \mathcal{M}^2.
    \end{equation}

 Complexify the function 
    \[
    f(z)=u_{N}(r_\delta(z),z)-u_{(c^*)}(r_\delta(z),z), \quad z\in\mathbb{C}
    \]
    where $r_\delta(z)=r(z)+\delta$.
    Then $f(z)$ is holomorphic in a neighborhood of $[0,2\pi]\times\{0\}$ in $\mathbb{C}$, periodic in the real-axis direction, and
    \[ f|_{Im(z)=0} =u_{N}(r_\delta(\theta),\theta)-u_{(\mbf{c}_*)}(r_\delta(\theta),\theta). \]
    Moreover, by interior analyticity estimates \cite{Morrey1957}, there is \(|f|\leq C\mathcal{M} \) in the complex neighborhood, where $C$ depends on the analytic function of boundary parameterization, $dist(\partial\Omega_\delta,\partial O_\rho)$ and is independent of $u$.
    Thus, by the exponential convergence of the trapezoidal rule for periodic analytic functions (see, e.g., \cite{Trefethen2014}),
    \[
    \left|\frac{2\pi}{J}\sum_{j=0}^{J-1} |f(\theta_j)|^2-\|f|_{[0,2\pi]}\|_{L^2(0,2\pi)} \right|\leq C\mathcal{M} e^{-aJ},
    \]
    where $C$ and $a$ depend on $r(\theta)$. 
    
    As a result,
    \begin{align}
         \|u_{(c_*)}-u_{N}\|^2_{L^2(\partial\Omega_\delta)}&\leq \frac{2\pi}{J}\sum_{j=0}^{J-1}|u_N(r_j,\theta_j)-u_{(c_*)}(r_j,\theta_j)|^2+C\mathcal{M}^2e^{-2aJ}\notag\\
         &\leq \frac{C}{J}\sum_{j=0}^{J-1}|u_{(c_*)}(r_j,\theta_j)-g_j|^2+\frac{C}{J}\sum_{j=0}^{J-1}|g_j-u_N(r_j,\theta_j)|^2+C\mathcal{M}^2e^{-2aJ}\notag\\
         &\leq C\eta^{\frac{2}{1+\beta}}M^2+C\mathcal{M}^2\eta^2\lesssim \eta^{\frac{2}{1+\beta}}M^2.\label{L2-error-uN}
    \end{align}
    Combining \eqref{L2-error-uN} and \eqref{u-uN} yields
    \[
    \|u_{(c_*)}-u\|_{L^2(\partial\Omega_\delta)}\leq \|u_{(c_*)}-u_N\|_{L^2(\partial\Omega_\delta)}+\|u_N-u\|_{L^2(\partial\Omega_\delta)}\leq C\eta^{\frac{1}{1+\beta}}M,
    \]
    which completes the proof.

\end{proof}

\begin{remark}
    If $\partial\Omega_\delta$ is not analytic, one can get the numerical estimate similarly, while $e^{-aJ}$ here is weakened by $h$, the maximum distance between two adjacent collocation points. 
\end{remark}

By Graf's addition theorem, the cylindrical harmonics can be converted to fundamental solutions with poles distributed along a circle. Both kinds of basis can be taken as learning samples in the learning based numerical method \cite{Chen2024}. %Thus, Proposition \ref{num-ana} can be viewed as an extension of results in \cite{Barnett2008} and \cite{Chen2024} to general geometries.

\paragraph{Numerical validation}

We use a numerical example to illustrate the approximation result. The boundary of the domain $\Omega_\delta$ is parameterized by $(r_\delta(\theta)\cos\theta,r_\delta(\theta)\sin\theta)$ where
\[
r_\delta(\theta)=1+0.15\cos(6\theta),
\]
is analytic.

The exact solution is taken as 
\[
u=\sum_{i=1}^3\Phi(x_i,y_i),\quad (x_i,y_i)=(\cos(2(i-1)\pi/3),\sin(2(i-1)\pi/3)),
\]
with three point sources placed inside the flower domain. $\Phi(x,y)$ is the fundamental solution corresponding to $k=6$. 
Then it satisfies the Helmholtz equation in $\mathbb{R}^2\setminus\overline{\Omega}$ and the radiation condition. Here, $\partial\Omega$ is parameterized by $r(\theta)=0.85+0.15\cos(6\theta)$. 

In numerical computation, we set $N=60$, $J=360$, $\rho=0.45$. with the algorithm in Proposition \ref{num-ana}. 
Fig~\ref{fig-num-eg} presents the exact solution, the approximation solution, and the error distribution. It can be seen that, in such an irregular domain, the exterior solution can be approximated by cylindrical harmonics. The numerical approximant satisfies the equation in a larger domain. Inside the computational domain ($\mathbb{R}^2\setminus\Omega_\delta$) the numerical result shows high approximation accuracy, while it grows significantly in $\mathbb{R}^2\setminus\overline{\Omega}$ (Fig~\ref{fig:eg-approx}).

For comparison, when the source is placed further to the boundary (Fig~\ref{fig:eg-b-exact}), with 
\[
u_b=\sum_{i=1}^3\Phi(x_{b,i},y_{b,i}),\quad (x_{b,i},y_{b,i})=(0.5\cos(2(i-1)\pi/3),0.5\sin(2(i-1)\pi/3)),
\]
and other settings are fixed, the numerical error is significantly reduced to an order of $10^{-16}$ (Fig~\ref{fig:eg-b-error}), which is consistent with the estimate that $\beta$ gets lower as $\delta$ increases.

\begin{figure}[H]
   % \centering
    \subfigure[Exact solution $Re(u)$]{
        \includegraphics[width=0.31\linewidth]{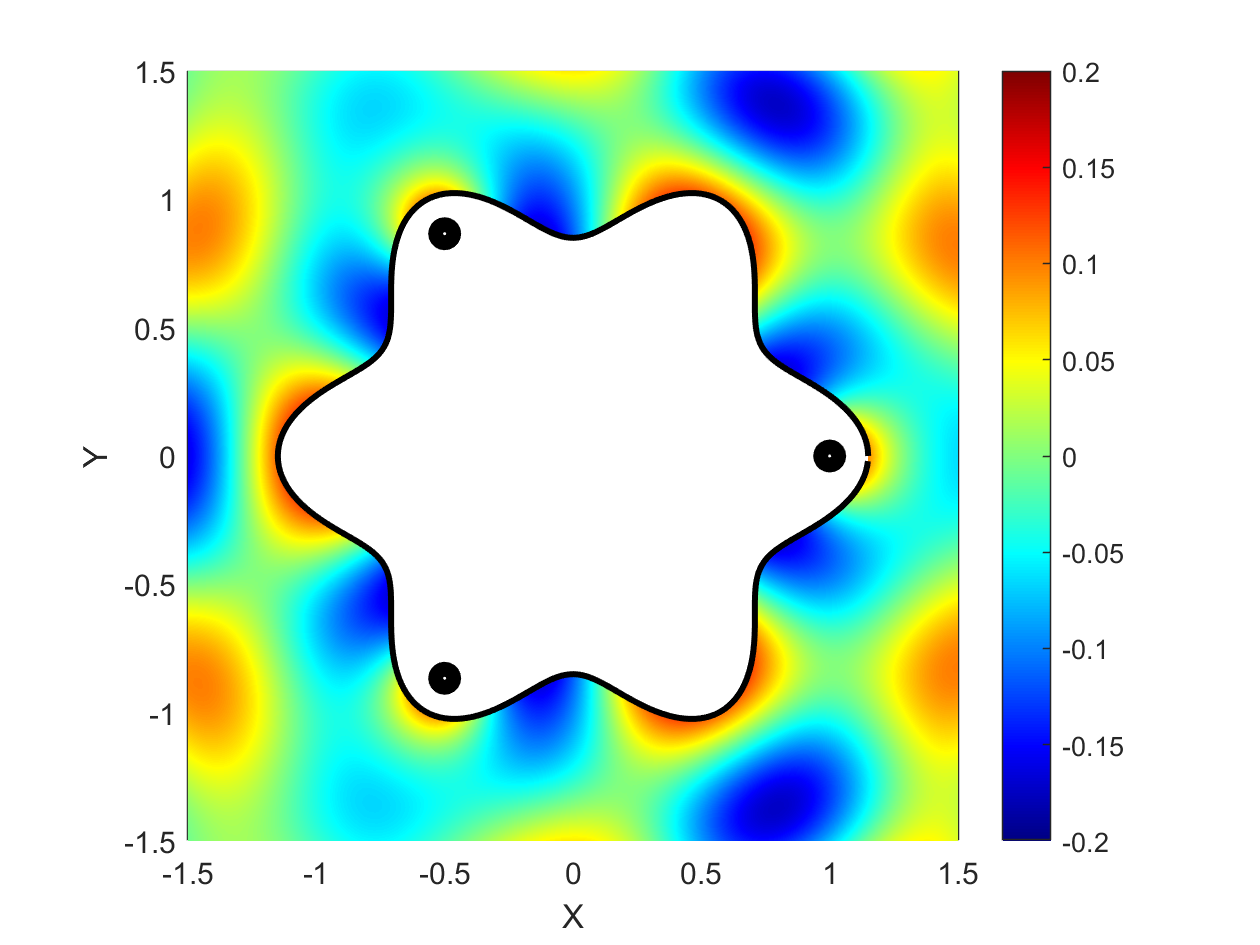}
        \label{fig:eg-exact}
    }
    \subfigure[Approximant]{
        \includegraphics[width=0.31\linewidth]{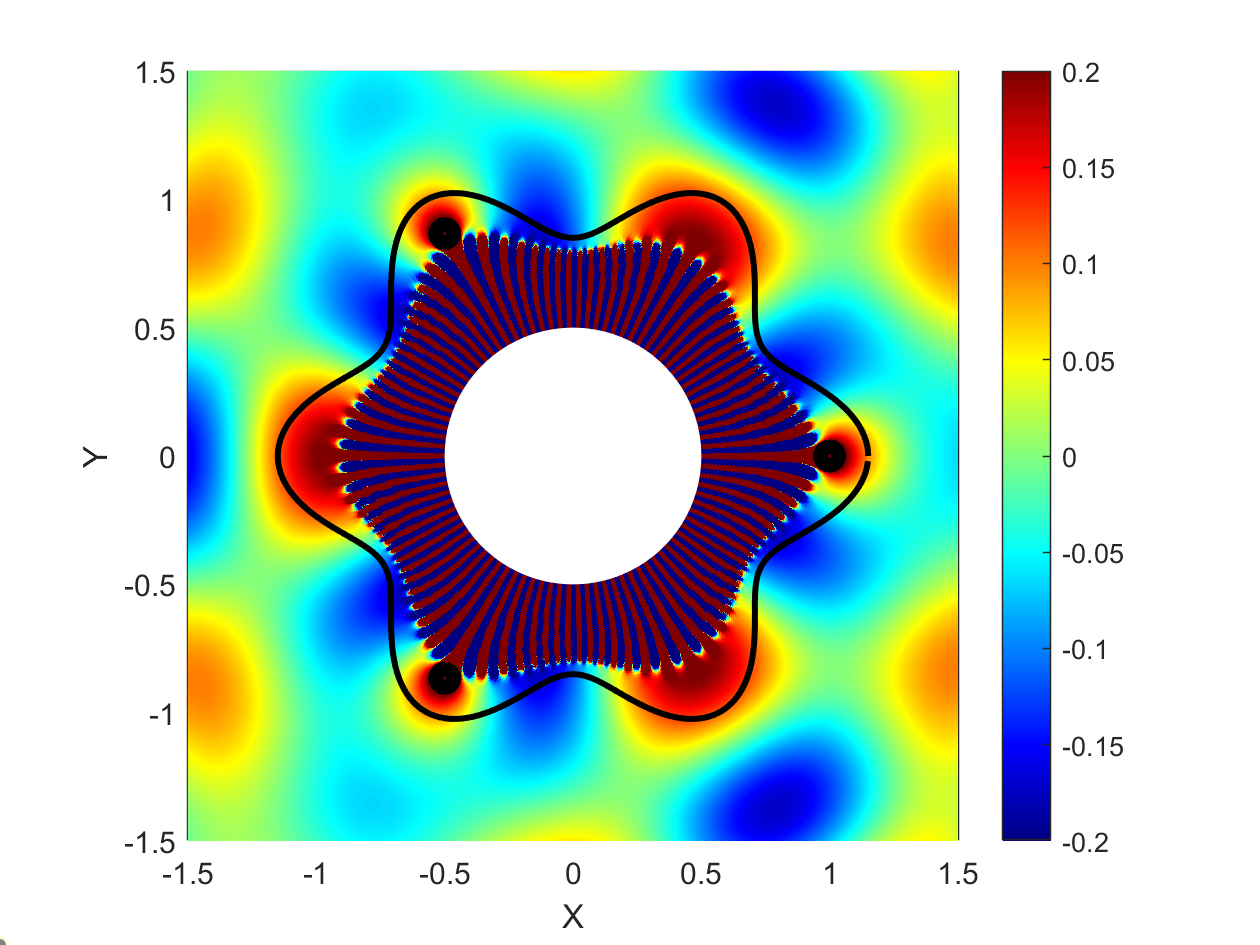}
        \label{fig:eg-approx}
    }
    \subfigure[Error distribution]{
        \includegraphics[width=0.31\linewidth]{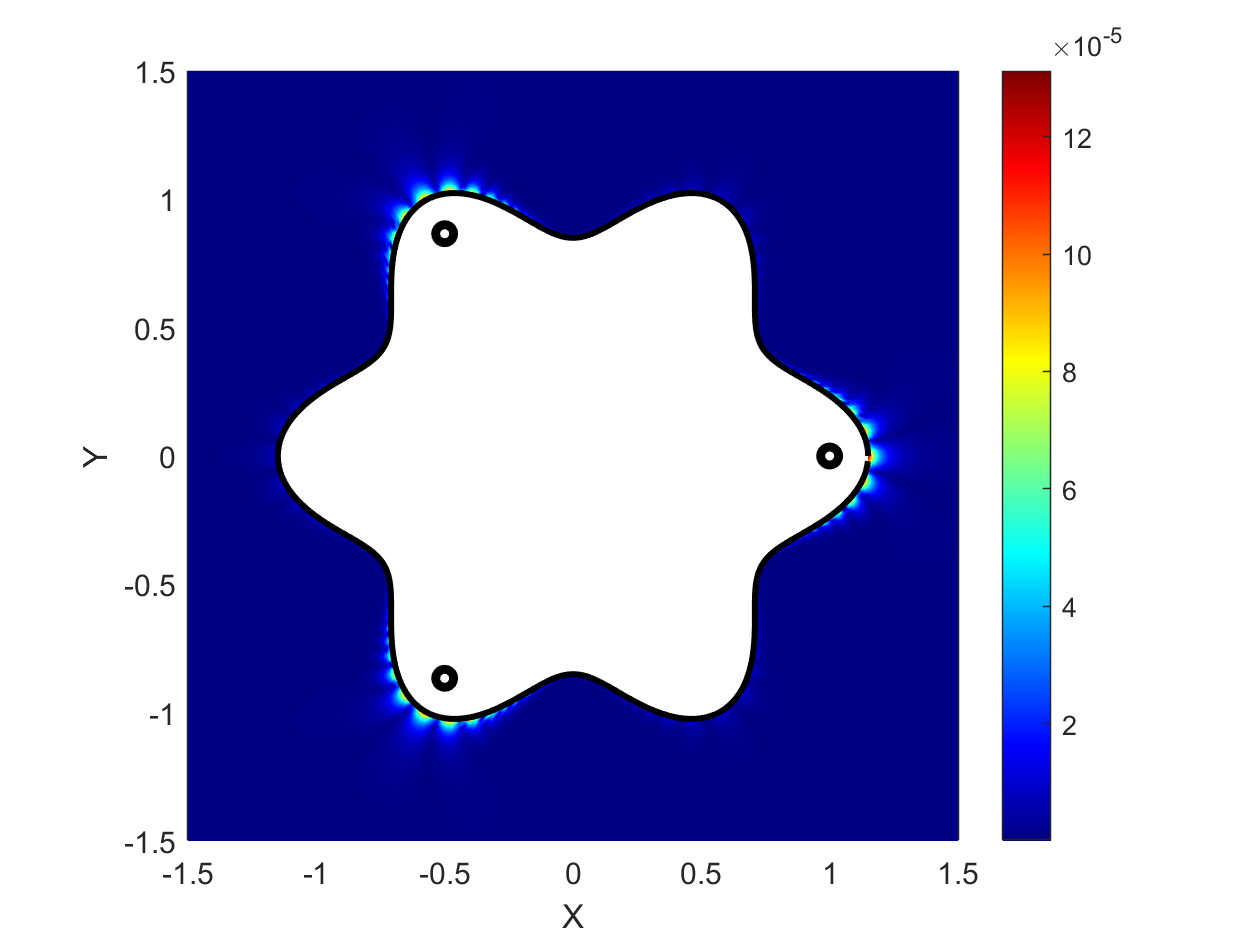}
        \label{fig:eg-error}
    }
    \caption{Comparison of the exact solution $u$, the approximated solution, and the pointwise error distribution.}
    \label{fig-num-eg}
\end{figure}

\begin{figure}[H]
   \begin{center}% \centering
    \subfigure[Exact solution $Re(u_b)$]{
        \includegraphics[width=0.31\linewidth]{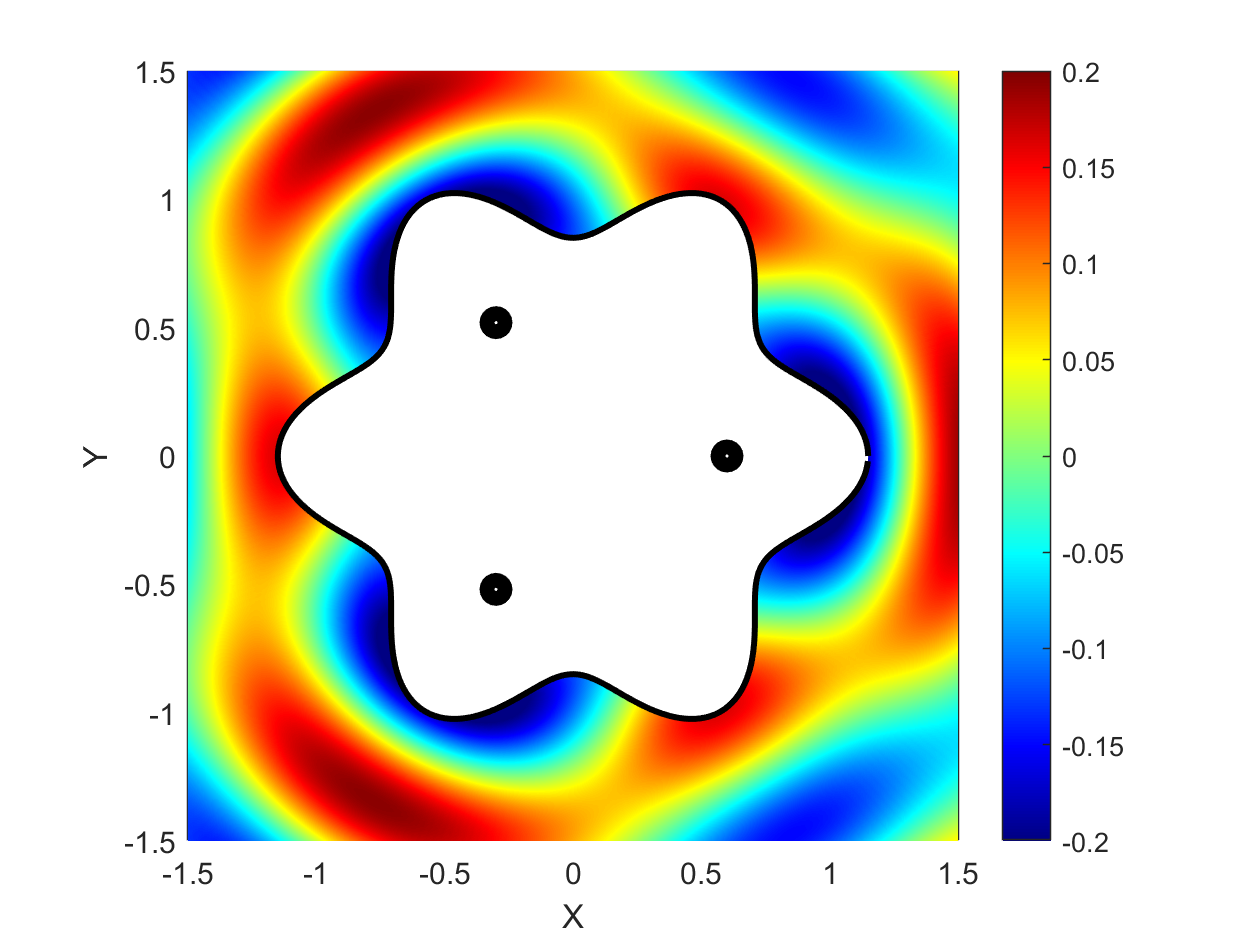}
        \label{fig:eg-b-exact}
    }
    \subfigure[Approximaant]{
        \includegraphics[width=0.31\linewidth]{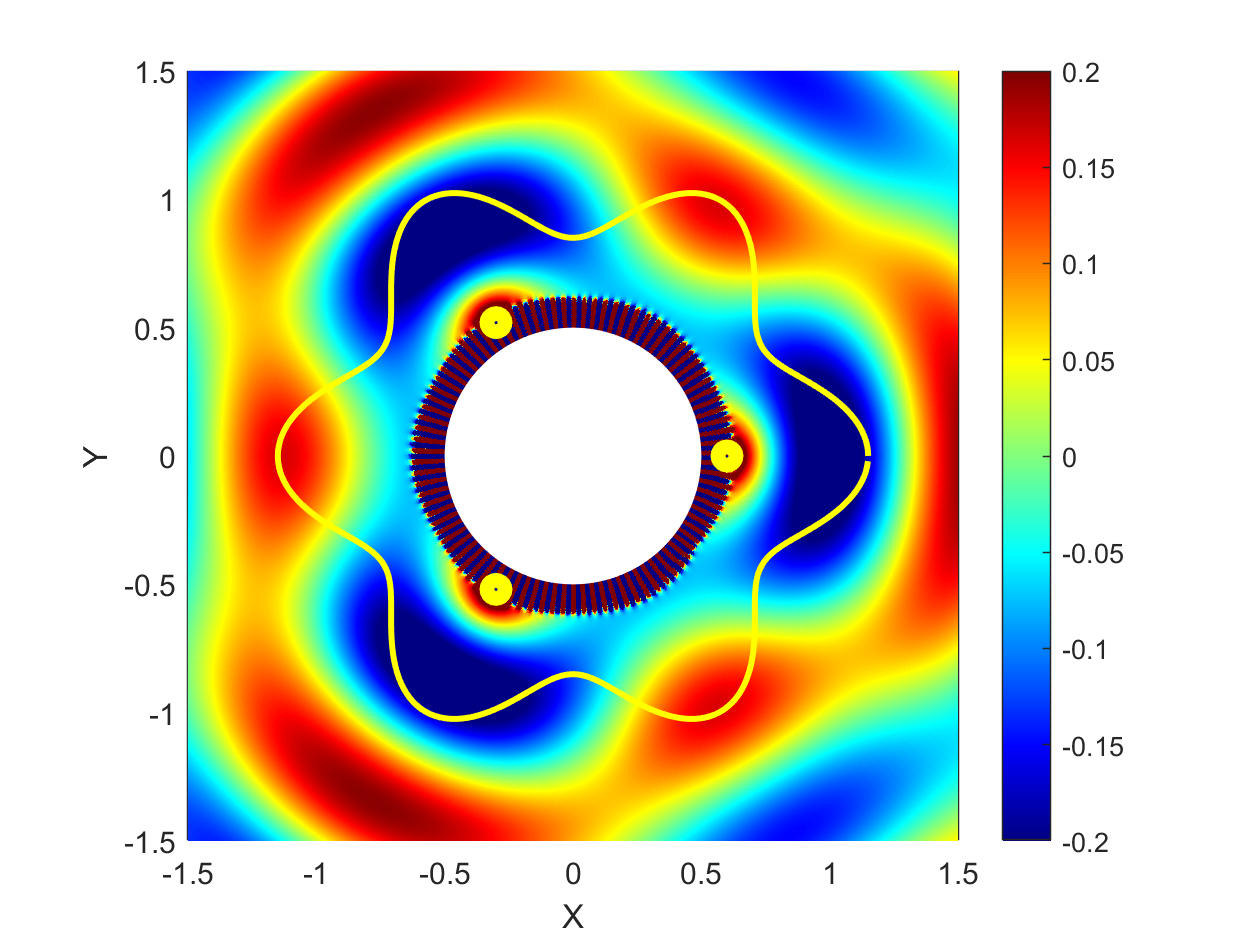}
        \label{fig:eg-b-approx}
    }
    \subfigure[Error distribution]{
        \includegraphics[width=0.31\linewidth]{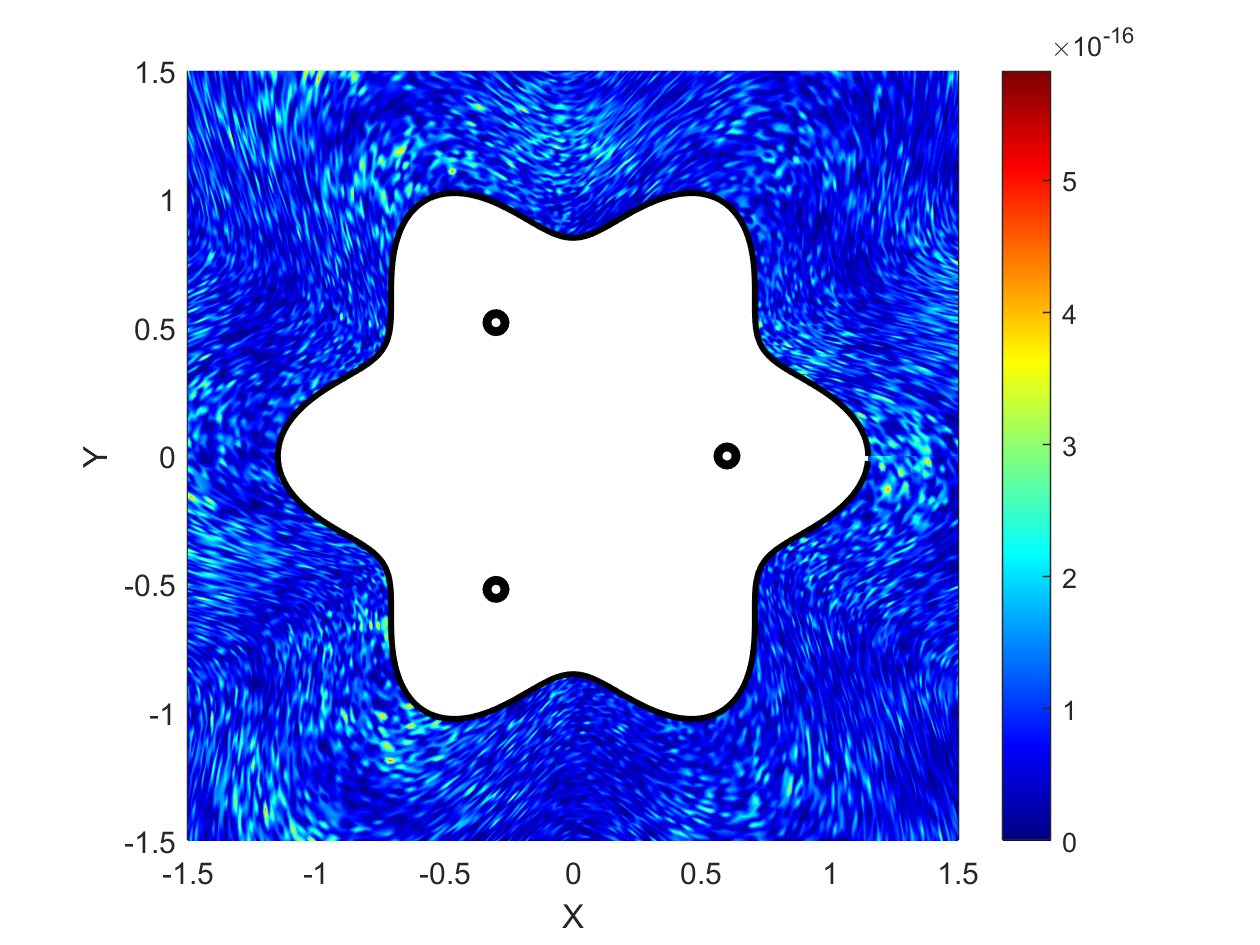}
        \label{fig:eg-b-error}
    }
    \caption{Comparison of the exact solution $u_b$, the approximated solution, and the pointwise error distribution.}
    \label{fig-num-eg-b}
    \end{center}
\end{figure}

\iffalse
\begin{figure}[H]
    \centering
    \includegraphics[width=0.32\linewidth]{eg1.png}
    \includegraphics[width=0.32\linewidth]{eg2.png}
    \includegraphics[width=0.32\linewidth]{eg4.png}
    \caption{The exact solution, approximant, and error distribution respectively.}
    \label{fig-num-eg}
\end{figure}

\begin{figure}[H]
    \centering
    \includegraphics[width=0.32\linewidth]{eg1-b.png}
    \includegraphics[width=0.32\linewidth]{eg2-b.png}
    \includegraphics[width=0.32\linewidth]{eg4-b.png}
    \caption{The exact solution $u_b$, approximant, and error distribution respectively.}
    \label{fig-num-eg-b}
\end{figure}
\fi
%The result can be extended to the case of an exterior problem which is often raised in scattering problems. It will be discussed in our coming paper.

\section{Concluding remark}
In this work, a quantitative Runge approximation for Helmholtz equation is provided, represented by cylindrical harmonics. The index depends explicitly on the distance to which the solutions can be extended. The estimate can be used in the numerical approximation of solutions to Helmholtz equations with spectral accuracy.

The related results can be extended to the three dimensional case in a similar way, by using spherical Hankel functions and spherical harmonic basis. Similar results hold for Laplace equation. 

According to the idea of the learning based method for Helmholtz equation with high frequency \cite{Chen2024}, the present result provides a choice of learning samples. Moreover, if the solution cannot be extended, we can decompose the singularities, lift the regularities, and further using the present approximation result, which will be discussed in our forthcoming paper.

\section*{Acknowledgments}
This work was supported by China's National Key Research and Development Programs (No. 2024YFA1012401), China's National Natural Science Foundation (Nos. 12241103, 12571458), Program for Innovative Research Team of Shanghai University of Finance and Economics (No. 2022110917), the Fundamental Research Funds for the Central Universities (No. 2025110603), and funds from Laboratory of Mathematics for Nonlinear Science, Fudan University.

\appendix
\section{Appendix}

\subsection*{Proof of Proposition \ref{runge-ball0}} %
    \begin{proof}

Denote $g=\gamma u|_{\partial O_R}$ and $f(\theta), \theta\in[0,2\pi]$ its parameterization. 
Since $\{\mathrm{e}^{\mathrm{i}n\theta}\}_{n\in\mathbb{Z}}$ forms a complete basis for $L^2(0,2\pi)$, then for $f\in L^2(0,2\pi)$, $f(\theta)=\sum_{m\in \mathbb{Z}}\hat{f}_m e^{\mathrm{i}m\theta}$.  Therefore, by uniqueness of the boundary value problem,
\[
u(\mbf{x})=\sum_{m\in\mathbb{Z}}C_mJ_m(kr)e^{\mathrm{i}m\theta}, \quad C_m=\frac{\hat{f}_m}{J_m(kR)},\quad \mbf{x}\in O_R.
\]
 Define 
        \[
        \tilde{u}_N(r,\theta )=\sum_{|m|\leq N} \frac{\hat{f}_m}{J_m(k R)}J_m(kr)e^{\mathrm{i}n\theta}.
        \]
%Without loss of generality, we assume that $R=1$ in the following.  
For $\varepsilon>0$, take $N\geq \frac{1}{\varepsilon^2}$, then
        \[
        \|u-\tilde{u}_N\|^2_{L^2(\partial O_R)}=2\pi \sum_{|m|>N}\frac{1}{\sqrt{1+\frac{m^2}{R^2}}}\sqrt{1+\frac{m^2}{R^2}}|\hat{f}_m|^2\leq \frac{2\pi R}{N}\sum_{m\in\mathbb{Z}}\sqrt{1+m^2}|\hat{f}_m|^2\leq \varepsilon^2 \|g\|^2_{H^{1/2}(\partial O_R)},
        \]
        i.e., $\|u-\tilde{u}_N|_{\partial O_R}\|_{L^2(\partial O_R)}\leq \varepsilon \|u\|_{H^{1/2}(\partial O_R)}$. 
      Then the norm of $\tilde{u}$ on $\partial O_\rho$ with $\rho> 1$ has the estimate as, 
        \[
        \sum_{|m|\leq N}\sqrt{1+\frac{m^2}{\rho^2}}\left|\hat{f}_m\frac{J_m(k\rho)}{J_m(kR)}\right|^2\leq C N \left(\frac{\rho}{R}\right)^{2N}\sum_{|m|\leq N}|\hat{f}_m|^2,
        \]
        where we have used $J_n(x)\sim \frac{1}{\sqrt{2\pi n}}\left(\frac{ex}{2n}\right)^n, n\rightarrow \infty $ uniformly on compact subsets of $(0,+\infty)$.
        As a result,
        \[
        \|\tilde{u}_N\|_{H^{1/2}(\partial O_\rho)}\leq C\sqrt{N}e^{N\log \frac{\rho}{R}}\|g\|_{L^2(\partial O_R)}\leq C e^{\left(\log\frac{\rho}{R}\right) \varepsilon^{-2}}\|u\|_{L^2(\partial O_R)}.
        \]

        For case ii), if $u$ is a solution to the boundary value problem in $O_\sigma$, $R<\sigma<\rho$, then we have proved that
        \[
        u(x)=\sum_{m\in\mathbb{Z}}C_mJ_m(kr)e^{\mathrm{i}m\theta},\quad x\in O_\sigma.
        \]
        Let $g=\gamma u|_{\partial O_\sigma}$, then $\hat{g}(m)=C_mJ_m(k\sigma)$. Still, define
        \[
        \tilde{u}_N(x)=\sum_{|m|\leqslant N}C_n J_m(kr)e^{\mathrm{i}m\theta}, 
        \]
        which satisfies the equation in $O_\rho$ and the radiation condition. Similarly,
        \begin{align*}
        \|u-\tilde{u}_N|_{\partial O_R}\|_{L^2(\partial O_R)}^2 & =2\pi \sum_{|m|>N}\frac{1}{\sqrt{1+\frac{m^2}{R^2}}}\sqrt{1+\frac{m^2}{R^2}}\left|\frac{\hat{f}_m}{J_m(k\sigma)}J_m(kR)\right|^2\\
        &\leq  C\frac{1}{N}\left(\frac{R}{\sigma}\right)^{2N}\sum_{m\in \mathbb{Z}}\sqrt{1+\frac{m^2}{R^2}}|\hat{f}_m|^2\leq \varepsilon^2\|g\|^2_{H^{1/2}(\partial O_R)}
        \end{align*}
        Take sufficiently large $N$ that $C\left(\frac{R}{\sigma}\right)^N\leq \varepsilon$, e.g., $N=\left[2\log\frac{\varepsilon}{C}/\log\frac{R}{\sigma}\right]+1$, and $\|u-\tilde{u}_N|_{\partial O_R}\|_{L^2(\partial O_R)}\leq \varepsilon \|g\|_{H^{1/2}(\partial O_R)}$ holds.
        Then, 
        \begin{align*}
        &\sum_{|m|\leqslant N}\sqrt{1+\frac{m^2}{\rho^2}}\left|\hat{f}_m\frac{J_m(k\rho)}{J_m(k\sigma)}\right|^2 =\sum_{|m|\leqslant N}\sqrt{1+\frac{m^2}{\rho^2}}|\hat{f}_m|^2\left(\frac{J_m(k\rho)}{J_m(kR)}\frac{J_m(kR)}{J_m(k\sigma)}\right)^2\\
        & \leq CN\left(\frac{\rho}{R}\right)^{2N}\sum_{|m|\leqslant N}|\hat{f}_m|^2\left(\frac{J_m(kR)}{J_m(k\sigma)}\right)^2
        \leq C e^{2N\log\left(\frac{\rho}{R}\right)}\|u\|^2_{L^2(\partial O_R)}\\
        &\leq C \exp\left( 2\log\frac{\varepsilon}{C}\cdot\frac{\log\frac{\rho}{R}}{\ln \frac{R}{\sigma}}\right)=C\varepsilon^{-2\frac{\log\frac{\rho}{R}}{\log \frac{\sigma}{R}}}\|u\|^2_{L^2(\partial O_R)}.
        \end{align*}
        
         \end{proof}

\begin{lemma}\label{estimate-u12}
    Assume that $\delta>0$ is a constant, $l$ is a smooth arc segment such that $l\subset O_{\delta/2}$. $\Gamma=\partial O_{\delta}$. Define 
    $u(x)=\int_{l}H_0^{(1)}(k|x-y|)\mu(y)\mathrm{d}y$, where $\mu\in L^2(l)$. Then there is 
\[
\|u\|_{H^{1/2}(\partial O_\delta)}\leq C\sqrt{\delta}\|\mu\|_{L^2(l)},
\]
where $C>0$ is a constant independent of $\delta$ and $\mu$. 
\end{lemma}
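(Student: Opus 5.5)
We bound the $L^2$ norm and the tangential derivative of $u$ on $\Gamma=\partial O_\delta$ separately, and then use
\[
\|u\|_{H^{1/2}(\Gamma)}\lesssim \|u\|_{L^2(\Gamma)}^{1/2}\|u\|_{H^1(\Gamma)}^{1/2}.
\]
The lower-order term $\|u\|_{L^2(\Gamma)}$ also enters the $H^{1/2}$ norm, with the convention $\sqrt{1+m^2/\delta^2}$ used in the proof of Lemma \ref{runge-ball}. The key geometric fact is that for $x\in\Gamma$ and $y\in l\subset O_{\delta/2}$ we have $\delta/2\le |x-y|\le 3\delta/2$. The kernel is therefore smooth, and we only need to track how its size scales with $\delta$.

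For the kernel we use the small-argument behaviour of the Hankel function. Since $k|x-y|\le 3k\delta/2$ is bounded (the interesting regime is $\delta\to0$, and for $\delta$ bounded below everything is trivially bounded), we have
\[
|H_0^{(1)}(k|x-y|)|\le C(1+|\log|x-y||)\le C(1+|\log\delta|),\qquad |\nabla_x H_0^{(1)}(k|x-y|)|=k|H_1^{(1)}(k|x-y|)|\le \frac{C}{|x-y|}\le \frac{C}{\delta}.
\]
Cauchy--Schwarz gives $|u(x)|\le \|H_0^{(1)}(k|x-\cdot|)\|_{L^2(l)}\|\mu\|_{L^2(l)}$. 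Because $l$ is a smooth arc inside $O_{\delta/2}$, its length is $|l|\lesssim\delta$; this uses that $l$ is a sub-arc of length at most $\delta/2$, as in the proof of Theorem \ref{runge-approx-new}. Hence $\|H_0^{(1)}(k|x-\cdot|)\|_{L^2(l)}\lesssim \sqrt{\delta}(1+|\log\delta|)$, and integrating over $\Gamma$, which has length $2\pi\delta$, gives
\[
\|u\|_{L^2(\Gamma)}\lesssim \delta(1+|\log\delta|)\|\mu\|_{L^2(l)}.
\]
The derivative bound is cleaner. Using $|\partial_\tau u(x)|\le C\delta^{-1}\sqrt{|l|}\,\|\mu\|$, we get
\[
\|\partial_\tau u\|_{L^2(\Gamma)}\lesssim \sqrt{\delta}\,\delta^{-1}\sqrt{\delta}\,\|\mu\|=C\|\mu\|_{L^2(l)}.
\]

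For the $H^{1/2}$ norm we pass to Fourier coefficients $\hat f_m$ of $u(\delta,\theta)$. Then
\[
\|u\|^2_{H^{1/2}}\approx 2\pi\sum_m \delta\sqrt{1+m^2/\delta^2}\,|\hat f_m|^2\le 2\pi\sum_m(\delta+|m|)|\hat f_m|^2 .
\]
The term $\sum_m|m||\hat f_m|^2$ is at most $(\sum|\hat f_m|^2)^{1/2}(\sum m^2|\hat f_m|^2)^{1/2}$. Since $\|u\|_{L^2(\Gamma)}^2=2\pi\delta\sum|\hat f_m|^2$ and $\|\partial_\tau u\|^2_{L^2(\Gamma)}=2\pi\delta^{-1}\sum m^2|\hat f_m|^2$, this term is bounded by $\|u\|_{L^2(\Gamma)}\|\partial_\tau u\|_{L^2(\Gamma)}\lesssim \delta(1+|\log\delta|)\|\mu\|^2$.

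This is where I expect the obstacle. The logarithm spoils the clean bound $\delta\|\mu\|^2$, so the constant would not be independent of $\delta$. To remove it, I would first subtract the constant part: replace $H_0^{(1)}(k|x-y|)$ by $H_0^{(1)}(k|x-y|)-H_0^{(1)}(k\delta)$. The $m=0$ mode is then the only one that sees the constant, and it has weight $\delta\cdot 1$ rather than $|m|$. For the oscillating part, $|H_0^{(1)}(k|x-y|)-H_0^{(1)}(k\delta)|\lesssim \big|\log(|x-y|/\delta)\big|+O(\delta^2|\log\delta|)\le C$, which gives $\|u-\bar u\|_{L^2(\Gamma)}\lesssim\delta\|\mu\|$. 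The $m=0$ contribution is
\[
\delta|\hat f_0|^2\lesssim \delta\,|l|\,(1+|\log\delta|)^2\|\mu\|^2=\delta^2\log^2\delta\,\|\mu\|^2\lesssim\delta\|\mu\|^2 ,
\]
since $\delta\log^2\delta$ is bounded for small $\delta$. Summing the pieces gives $\|u\|^2_{H^{1/2}(\Gamma)}\le C\delta\|\mu\|^2_{L^2(l)}$, which is the claim.
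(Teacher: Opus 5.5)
Your argument is correct, but it is organized differently from the paper's. The paper uses the Gagliardo--Slobodeckij form $\|u\|^2_{L^2(\Gamma)}+\int_\Gamma\int_\Gamma|u(x)-u(y)|^2|x-y|^{-2}\,\mathrm{d}s_x\mathrm{d}s_y$. It applies Cauchy--Schwarz in the source variable and the Lipschitz bound $|\Phi(x,z)-\Phi(y,z)|\le C|x-y|/\delta$, so the singular weight cancels and the seminorm is at most $C|\Gamma|^2|l|\delta^{-2}\|\mu\|^2_{L^2(l)}\lesssim\delta\|\mu\|^2_{L^2(l)}$. Because a difference quotient never sees constants, $\log\delta$ appears only in the $L^2$ term, where $\delta^2\log^2\delta\lesssim\delta$ absorbs it.

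You instead use the Fourier norm of Lemma \ref{runge-ball}, bound $\|u\|_{L^2(\Gamma)}$ and $\|\partial_\tau u\|_{L^2(\Gamma)}$ from the same kernel estimates, and interpolate. Since $\|u\|_{L^2(\Gamma)}$ carries the constant mode with its logarithm, you first have to peel off $H_0^{(1)}(k\delta)\int_l\mu$; the seminorm gives the paper exactly this for free. In exchange, you only ever differentiate along $\Gamma$. The paper's mean-value step, with the supremum of $\nabla_x\Phi$ taken over $\Gamma$, is valid only along the arc (at the cost of a factor $\pi/2$), not along the chord, which can cross $O_{\delta/2}\supset l$.

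The interpolation is more than you need. Since $|m|\le m^2$ for integers,
\[
\sum_m|m||\hat f_m|^2\le\sum_m m^2|\hat f_m|^2=\frac{\delta}{2\pi}\|\partial_\tau u\|^2_{L^2(\Gamma)}\lesssim\delta\|\mu\|^2_{L^2(l)}.
\]
Hence $\|u\|^2_{H^{1/2}(\Gamma)}\le\|u\|^2_{L^2(\Gamma)}+\delta\|\partial_\tau u\|^2_{L^2(\Gamma)}$, and neither the logarithm nor the subtraction ever arises. This is the Fourier counterpart of the paper's Lipschitz argument.

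Finally, both proofs need $|l|\lesssim\delta$ and $\delta$ bounded above. You were right to take $|l|\le\delta/2$ from the construction in the proof of Theorem \ref{runge-approx-new}. The containment $l\subset O_{\delta/2}$ alone does not imply it: a long spiral carrying constant $\mu$ violates the inequality. The paper uses this length bound without comment.
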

\begin{proof}
 
For $0<s<1$, $u\in L^2(\Gamma)$ where $\Gamma$ is a closed smooth curve,
\[
\|u\|^2_{H^s(\Gamma)}=\|u\|^2_{L^2(\Gamma)}+|u|^2_{H^s(\Gamma)},
\]
where 
\[
|u|^2_{H^s(\Gamma)}=\int_\Gamma\int_\Gamma \frac{|u(x)-u(y)|^2}{|x-y|^{1+2s}}\mathrm{d}s_x\mathrm{d}s_y.
\]
For
\[
u(x)=\int_l \Phi(x,z)\mu(z)\mathrm{d}z, \quad \Phi(x,z)=\frac{\mathrm{i}}{4}H_0^{(1)}(k|x-z|),
\]
there is
\[
|u(x)-u(y)|^2\leq \left(\int_{l}|\Phi(x,z)-\Phi(y,z)|^2\mathrm{d}z\right)\|\mu\|^2_{L^2(l)}.
\]
%Let $\Phi(x,z)=H_0^{(1)}(k|x-z|)$.
Then for $s=1/2$, we have
\[
|u(x)|^2_{H^{1/2}(\Gamma)}\leq \|\mu\|^2_{L^2(l)}\int_{\Gamma}\int_\Gamma\frac{\int_l|\Phi(x,z)-\Phi(y,z)|^2\mathrm{d}z}{|x-y|^2}\mathrm{d}s_x\mathrm{d}s_y,
\]
in which
\[
|\Phi(x,z)-\Phi(y,z)|\leq |x-y|\sup_{\xi\in \Gamma}|\nabla_x\Phi(\xi,z)|\leq C|x-y|\sup_{\xi\in\Gamma}\frac{1}{|\xi-z|}\leq \frac{C}{d}|x-y|.
\]
with $d=dist(l,\Gamma)$. 
Therefore,
\[
|u|^2_{H^{1/2}(\Gamma)}\leq \|\mu\|^2_{L^2(l)}|\Gamma|^2\frac{|l|}{d^2}.%\leq C\delta\|\mu\|^2_{L^2(l)}.
\]
For $\Gamma =\partial O_{\delta}$, $l\subset O_{\delta/2}$, $d\geq \delta$,
\[
|u|^2_{H^{1/2}(\partial O_{\delta/2})}\leq C\delta \|\mu\|^2_{L^2(l)}.
\]
Consider the $L^2$ term, since $H_0^{(1)}(|x|)\sim C\ln |x|$ as $|x|\rightarrow 0$, then
\[
\|u\|^2_{L^{2}(\partial O_{\delta/2})}\leq C\delta^2\ln^2\delta\|\mu\|_{L^2(l)}.
\]
We finally get
\[
\|u\|_{H^{1/2}(\partial O_{\delta/2})}\leq C\sqrt{\delta}\|\mu\|_{L^2(l)}.
\]
   
\end{proof}
%where $C$ is independent of $\delta$.

\addcontentsline{toc}{section}{

\end{document}